\documentclass{amsart}
\usepackage{amssymb,amsthm,amsmath,epsfig,latexsym,xypic,supertabular}
\usepackage{calc}
\usepackage{latexsym}
\usepackage{comment}
\usepackage{amscd,amssymb,subfigure,hyperref}
\usepackage[arrow,matrix,graph,frame,poly,arc,tips]{xy}
\usepackage{color}
%\usepackage[mathlines]{lineno}
%\linenumbers

\setlength{\oddsidemargin}{.35in}
\setlength{\evensidemargin}{.25in}
\setlength{\textwidth}{5.8in}
\setlength{\textheight}{8.2in}

\begin{document}

\newcommand{\mmbox}[1]{\mbox{${#1}$}}
\newcommand{\affine}[1]{\mmbox{{\mathbb A}^{#1}}}
\newcommand{\Ann}[1]{\mmbox{{\rm Ann}({#1})}}
\newcommand{\caps}[3]{\mmbox{{#1}_{#2} \cap \ldots \cap {#1}_{#3}}}
\newcommand{\N}{{\mathbb N}}
\newcommand{\V}{{\mathbb V}}
\newcommand{\Z}{{\mathbb Z}}
\newcommand{\Q}{{\mathbb Q}}
\newcommand{\R}{{\mathbb R}}
\newcommand{\KK}{{\mathbb C}}
\newcommand{\A}{{\mathcal A}}
\newcommand{\B}{{\mathcal B}}
\newcommand{\OO}{{\mathcal O}}
\newcommand{\C}{{\mathbb C}}
\newcommand{\pp}{{\mathbb P}}
\newcommand{\OS}{{T^d(X,p)}}

\newcommand{\val}{\mathop{\rm degree}\nolimits}
\newcommand{\charr}{\mathop{\rm char}\nolimits}
\newcommand{\ann}{\mathop{\rm ann}\nolimits}
\newcommand{\rank}{\mathop{\rm rank}\nolimits}
\newcommand{\Tor}{\mathop{\rm Tor}\nolimits}
\newcommand{\Ext}{\mathop{\rm Ext}\nolimits}
\newcommand{\Hom}{\mathop{\rm Hom}\nolimits}
\newcommand{\Sym}{\mathop{\rm Sym}\nolimits}
\newcommand{\im}{\mathop{\rm im}\nolimits}
\newcommand{\rk}{\mathop{\rm rk}\nolimits}
\newcommand{\codim}{\mathop{\rm codim}\nolimits}
\newcommand{\supp}{\mathop{\rm supp}\nolimits}
\newcommand{\coker}{\mathop{\rm coker}\nolimits}
\newcommand{\st}{\mathop{\rm st}\nolimits}
\newcommand{\lk}{\mathop{\rm lk}\nolimits}
\sloppy

\newtheorem{thm}{Theorem}[section]%[subsection]
\newtheorem*{thm*}{Theorem}
\newtheorem{defn}[thm]{Definition}
\newtheorem{prop}[thm]{Proposition}
\newtheorem{pref}[thm]{}
\newtheorem*{prop*}{Proposition}
\newtheorem{conj}[thm]{Conjecture}
\newtheorem{lem}[thm]{Lemma}
\newtheorem{rmk}[thm]{Remark}
\newtheorem{cor}[thm]{Corollary}
\newtheorem{notation}[thm]{Notation}
\newtheorem{exm}[thm]{Example}
\newtheorem{comp}[thm]{Computation}

% To adjust space between rows of arrays:
\newcommand{\msp}{\renewcommand{\arraystretch}{.5}}
\newcommand{\rsp}{\renewcommand{\arraystretch}{1}}

% and to create a short pmatrix:
\newenvironment{lmatrix}{\renewcommand{\arraystretch}{.5}\small
  \begin{pmatrix}} {\end{pmatrix}\renewcommand{\arraystretch}{1}}
\newenvironment{llmatrix}{\renewcommand{\arraystretch}{.5}\scriptsize
  \begin{pmatrix}} {\end{pmatrix}\renewcommand{\arraystretch}{1}}
\newenvironment{larray}{\renewcommand{\arraystretch}{.5}\begin{array}}
  {\end{array}\renewcommand{\arraystretch}{1}}

  \newenvironment{changemargin}[2]{%
\begin{list}{}{%
\setlength{\topsep}{0pt}%
\setlength{\leftmargin}{#1}%
\setlength{\rightmargin}{#2}%
\setlength{\listparindent}{\parindent}%
\setlength{\itemindent}{\parindent}%
\setlength{\parsep}{\parskip}%
}%
\item[]}{\end{list}}
\vskip -2in
\title[Algebraic aspects of homogeneous Kuramoto Oscillators]
{Algebraic aspects of homogeneous Kuramoto Oscillators}

\author[Heather A. Harrington]{Heather A. Harrington}
\thanks{Harrington supported by EPSRC EP/R018472/1, EP/R005125/1, EP/T001968/1, RGF 201074, UF150238, \\ \mbox{     }\mbox{     }\mbox{     }\mbox{     }and a Royal Society University Research Fellowship.}
\address{Mathematical Institute, University of Oxford, Oxford OX2 6GG, UK and \newline
\mbox{         }\mbox{       }\mbox{       }\mbox{  }Wellcome Centre for Human Genetics, University of Oxford, Oxford OX3 7BN, UK and \newline
\mbox{         }\mbox{       }\mbox{       }\mbox{  }Max Planck Institute of Molecular Cell Biology and Genetics, 01307 Dresden, Germany and \newline
\mbox{         }\mbox{       }\mbox{       }\mbox{  }Centre for Systems Biology Dresden, 01307 Dresden, Germany and \newline
\mbox{         }\mbox{       }\mbox{       }\mbox{  }Faculty of Mathematics, Technische Universitat Dresden, 01062 Dresden, Germany}
\email{\href{mailto:harrington@maths.ox.ac.uk}{harrington@maths.ox.ac.uk}}
\urladdr{\href{https://www.maths.ox.ac.uk/people/heather.harrington}{https://www.maths.ox.ac.uk/people/heather.harrington}}

\author[Hal Schenck]{Hal Schenck}
\thanks{Schenck supported by NSF DMS 2006410 and a Leverhulme Visiting Professorship.}
\address{Department of Mathematics,
Auburn University, Auburn, AL 36849 and \newline
\mbox{         }\mbox{       }\mbox{       }\mbox{  }Mathematical Institute, University of Oxford, Oxford UK}
\email{\href{mailto:hks0015@auburn.edu}{hks0015@auburn.edu}}
\urladdr{\href{http://webhome.auburn.edu/~hks0015/}%
{http://webhome.auburn.edu/~hks0015/}}

\author[Mike Stillman]{Mike Stillman}
\thanks{Stillman supported by NSF DMS 2001367 and a Simons Fellowship.}
\address{Department of Mathematics,
Cornell University, Ithaca, NY 14850 and \newline
\mbox{         }\mbox{       }\mbox{       }\mbox{  }Mathematical Institute, University of Oxford, Oxford UK}
\email{\href{mailto:mike@math.cornell.edu}{mike@math.cornell.edu}}
\urladdr{\href{https://math.cornell.edu/michael-e-stillman}{https://math.cornell.edu/michael-e-stillman}}

\subjclass[2010]{90C26, 90C35, 34D06, 35B35}
\keywords{Kuramoto Oscillator, Synchronization, Stable state}

\begin{abstract}
\noindent We investigate algebraic characteristics of networks of coupled oscillators. Translating dynamics into a system of algebraic equations enables us to identify classes of network topologies that exhibit unexpected behaviors. Many previous studies focus on synchronization of networks having high connectivity, or of a specific type (e.g. circulant networks). We introduce the Kuramoto ideal; an algebraic analysis of this ideal allows us to identify features beyond synchronization, such as positive dimensional components in the set of potential solutions (e.g. curves instead of points). We prove sufficient conditions on the network structure for such solutions to exist. The points lying on a positive dimensional component of the solution set can never correspond to a linearly stable state. We apply this framework to give a complete analysis of linear stability for all networks on at most eight vertices. Furthermore, we describe a construction of networks on an arbitrary number of vertices having linearly stable states that are not twisted stable states. 
%or Much study has been devoted to the behaviour of networks of coupled %oscillators having high connectivity--those networks where the degree of every vertex in the network is large in relation to the total number of %nodes. However, there has been less work investigating the %characteristics of networks of arbitrary (including low) connectivity %admitting unexpected stable states.  
\vskip -.2in
\end{abstract}
\vskip -.2in
\maketitle
\vskip -.2in
\renewcommand{\thethm}{\thesection.\arabic{thm}}
\setcounter{thm}{0}
\vskip -.2in

\section{Introduction}

Dynamics on networks is an active area of mathematical research, with wide applicability in various fields including physics, engineering, biology and neuroscience. The study of dynamics on networks often involves understanding how the structure of the network influences the dynamics of the system. Dating back to 17th century, Dutch inventor and scientist, Christiaan Huygens, observed that two pendulum clocks hanging from a wall would synchronise their swing, which led to the study of coupled oscillators. 
Coupled oscillators appear in numerous applications, including biological and chemical networks \cite{Buck, matheny2019exotic, werner2005firefly,winfree1967biological}, power grids \cite{dokania2011low,dorfler1,dorfler2}, neuroscience \cite{Cumin}, spin glasses \cite{Kloumann}, and wireless communications \cite{dokania2011low,simeone2008distributed} (to name just a few). 

 A much studied question involving systems of oscillators is that of {\em synchronization}: under what conditions do the oscillators operate in harmony. One particularly well known instance of this involves flashing of fireflies \cite{Buck}. Our study focuses on understanding the algebra of Kuramoto oscillators, and using algebraic methods to analyze aspects of networks beyond synchronization. We give a complete description of networks of coupled oscillators with at most eight vertices that have linearly stable solutions. Our analysis also leads us to a theorem characterizing sufficient conditions for a network to have positive dimensional components in the set of potential solutions.
\subsection{Background on Kuramoto oscillators}
One of the most investigated oscillator models is due to Kuramoto \cite{kuramoto74,kuramoto85}. Let $G$ be a graph with $V$ vertices, representing the coupling of a system of oscillators, and for vertex $v_i$, let $G_i$ denote the set of vertices adjacent to $v_i$. The Kuramoto model is the system of $V$ equations below, where the phase $\theta_i$ is the angle at vertex $i$ at time $t$, $\omega_i$ is the natural frequency of the $i^{th}$ oscillator, and $K$ is the coupling strength:
\begin{equation}\label{Keqns}
\dot{\theta_i} = \omega_i + K \sum\limits_{v_j \in G_i} \sin(\theta_j-\theta_i).
\end{equation}
Perhaps the most frequently studied case is the {\em homogeneous} model, where the system consists of identically coupled phase oscillators; this allows us to assume $\omega_i=0$ and $K=1$, leading to equations
\begin{equation}\label{homogK}
\dot{\theta_i} = \sum\limits_{v_j \in G_i} \sin(\theta_j-\theta_i).
\end{equation}
We say that 
\[
{\bf \theta}=\{\theta_1^*,\ldots, \theta_V^*\} \in \R^V
\]
is an {\em equilibrium} if for all vertices $v_i$,
\begin{equation}\label{homogKsols}
0 = \sum\limits_{v_j \in G_i} \sin(\theta_j^*-\theta_i^*).
\end{equation}
As noted in \cite{Abdalla}, the homogeneous Kuramoto model is an instance of a gradient flow of an analytic potential function on a compact analytic manifold. Solutions of such flows are always convergent to a single point as a consequence of the \L{}ojasiewicz gradient inequality, see Lageman \cite{lageman}.
%the homogeneous Kuramoto model has the simplest possible long-term behavior that can one %can hope to have in a dynamical system: the solutions of the associated differential %equations always approach equilibrium states as time tends to infinity, which follows from %.  No other long-term behavior is possible: no limit cycles, no quasiperiodic solutions, no %chimera states, no chaos. Nothing but equilibrium points.

Therefore, for the homogeneous Kuramoto model, the natural question boils down from dynamics to algebraic geometry: what is the structure of the set of equilibrium states for a network of identical Kuramoto oscillators? The equilibrium states can be expressed as solutions of a system of {\em algebraic} equations, hence we also refer to equilibrium points as {\em solutions}. Solutions such that 
\[
\theta_i^*-\theta_j^* \in \{0, \pi\}
\]
are called {\em standard}. 

Our perspective yields new insights into the homogeneous Kuramoto model: for example, a continuous family of equilibrium states will correspond to a geometric object, and we will be interested in determining characteristics (for example, the dimension) of that object. 

Due to the rotational symmetry in the Kuramoto model, we always have at least a circle of equilibrium points. Changing coordinates so that $\theta_0 = 0$ allows us to ignore the trivial rotational symmetry. This yields a dichotomy, with some genuine isolated equilibrium points, and others part of a continuous family, and we refer to points in such a family as {\em positive dimensional solutions}. Of particular importance in understanding the long-term behavior of the system are those solutions which are {\em stable}.
%\pagebreak

\begin{defn}
Let $p$ be a solution to a system of first order ODEs. Then $p$ is (locally asymptotically) {\em stable} if there exists a small open neighborhood $N_\epsilon (p)$ of $p$ such that for any $q \in N_{\epsilon}(p)$, as the system moves forward in time from the point $q$, the solution converges back to the point $p$. 
\end{defn}

A system is said to {\em synchronize} if the only stable solution occurs when the $\theta_i^*$ are all equal. What properties of $G$ guarantee that a system synchronizes?

The {\em connectivity} of $G$  is defined as 
\[
\mu_c(G)= \min_{v \in G} \{ \val(v)/(V-1) \},
\]
and in \cite{taylor}, Taylor shows that if $\mu_c(G) \ge .94$ then the corresponding network synchronizes. Recent work of Ling-Xu-Bandeira \cite{ling} improves this bound to show synchronization when $\mu_c(G) \ge .79$. Non-standard stable solutions exist for certain configurations, such as when $G$ is a cycle of length $\ge 5$, which are the simplest avatars of the {\em circulant matrices} analyzed in \cite{wiley}. 

\medskip

\noindent For a system of ODEs given by $\dot{\theta_i}=f_i$, we study the related condition of {\em linear stability}: that for the Jacobian matrix   
\begin{equation}\label{Jacobian}
J(p)= \left[ \!
\begin{array}{ccc}
\frac{\partial(f_1)}{\partial \theta_1}(p) & \cdots &\frac{\partial(f_1)}{\partial \theta_n }(p) \\
\vdots & \ddots & \vdots\\
\frac{\partial(f_n)}{\partial \theta_1}(p) & \cdots& \frac{\partial(f_n)}{\partial \theta_n}(p) 
\end{array}\! \right]
\end{equation}
all eigenvalues have negative real part; as the homogeneous Kuramoto model has symmetric Jacobian, this means all eigenvalues are negative. For a homogeneous Kuramoto system and solution $p$, it is easy to see that the rows of $J(p)$ sum to zero, hence $\dim(\ker(J(p)) \ge 1$ and there is always one zero eigenvalue. In keeping with convention, we call a solution to Equation~\ref{homogK} {\em linearly stable} if it has all but one eigenvalue negative, because one of the equations can be eliminated.

In this paper, our goal is to understand the algebra of the solution sets to the Kuramoto equations appearing in Definitions~\ref{IGdef} and \ref{KuramotoAlg} below, and the connection to the topology of the graph $G$. In \cite{Mehta15}, the authors apply numerical algebraic geometry to the Kuramoto model and remark that the investigation of positive dimensional components is beyond the scope of their paper. Using a combination of numerical and symbolic methods we address this in \S 2. Algebraic tools are also utilized in \cite{Chen} and \cite{Coss}, but in a different context than this paper. Our focus is on the following two questions:
\begin{itemize}
    \item What graphs admit a positive dimensional set of possible solutions? 
    \item What graphs admit {\em exotic solutions}--linearly stable solutions where the $\theta_i$ are not all equal? One common type of exotic solution is a {\em twisted stable state}, where there is a periodic shift in the angles. A cycle $C_n$ with $n\ge 5$ always has twisted stable solutions, but these can also arise for noncycles, as in Example~\ref{Special8}. 
    \end{itemize}

\subsection{Conventions} All graphs we work with are {\em SCT} graphs: graphs that are simple (no loops or multiple edges), connected, and two-connected (all vertices of degree $\ge 2$). For a graph that does have a vertex $v_0$ of degree one, if $v_0v_1$ denotes the edge connecting $v_0$ to $G$, then choosing coordinates so $\theta_1 =0$ shows the only possible equilibria values for $\theta_0$ are $\{0, \pi\}$. Negative semidefinite matrices form a convex cone generated by rank one matrices, and a simple calculation shows that if $G’ = G \setminus v_0$ has an exceptional solution, so does $G$. We are investigating the other direction: when does adding a ``peninsular’’ vertex to a graph introduce new exotic solutions?
\subsection{Recent work} Some of the more frequently studied mathematical aspects of Kuramoto oscillators include (but are not limited to) the following:
\begin{itemize}
\item Synchronization, stability, connectivity: \cite{kassabov2, ling, lu,pikovsky2003synchronization,taylor}.
\item Special classes of graphs: random, dense, $k$-connected: \cite{canale2,deville2016phase,kassabov, townsend, zhang}.
\item Graphs with exotic stable states: \cite{abrams2,canale,wiley}. 
\end{itemize}
 \subsection{Results of this paper} The main advantage of an algebro-geometric approach is that it allows us to identify \emph{all} solutions to Equation~\ref{homogKsols}, in particular all linearly stable solutions. 
\begin{itemize}
\item In \S 2, we prove algebraic and algorithmic criteria for an SCT graph to have positive dimensional solutions to Equation~\ref{homogKsols}. The significance of this is that any solution on a positive dimensional component cannot be linearly stable.
%In the set of isomorphism classes of SCT graphs on $n$ vertices, there is always at least one graph which has a positive dimension solution locus. 
We also prove that all standard solutions must lie on a specific irreducible algebraic variety,
the Segre embedding of $\pp^1 \times \pp^{n-1}$. This allows us to eliminate the $2^{V-1}-1$ unstable standard solutions from consideration, simplifying the analysis of potential non-standard solutions.

\vskip .05in
\item In \S 3, we use numerical algebraic geometry to identify SCT graphs with $V \in \{4,5,6,7,8\}$ vertices which admit an exotic solution. There are, respectively, $\{3,11,61,507,7442\}$ isomorphism classes of SCT graphs with $V \in \{4,5,6,7,8\}$. Of the graphs on eight vertices, we find 81 having exotic solutions, and every one of these--with one exception--has an induced cycle of length at least five. In general, gluing a cycle on five or more vertices (which has an exotic solution) to an arbitrary graph $G$ along a common edge does not preserve the exotic solution. We show an exotic solution exists for the graph $G'$ obtained by gluing all vertices of a graph $G$ to the two vertices of an edge of a five-cycle. 
\end{itemize}
In \S 4 we give examples of our computations illustrating several interesting phenomena, and in \S 5 we close with a number of questions for further research that are raised by our results.
\subsection{Encoding the graph $G$ algebraically}
For a system of Kuramoto oscillators on a graph $G$ with $V=n$, label the vertices with $\{ 0, \ldots, n-1 \}$. We translate from the trigonometric relations of Equation~\ref{homogKsols} to algebraic relations via the substitution $x_i=\sin(\theta_i)$ and $y_i=\cos(\theta_i)$. This translation yields constraints expressed as {\em an ideal}:
\[
I_\theta = \langle x_0^2+y_0^2-1, \ldots, x_{n-1}^2+y_{n-1}^2-1 \rangle.
\]
The solutions to these equations capture the relations 
$\sin^2(\theta_i)+\cos^2(\theta_i)=1$. We also need to encode the dynamics of the graph, described by a polynomial equation at each vertex of $G$:
\begin{defn}\label{IGdef}
For $v_i \in G$, let $G_i$ denote the set of vertices $\{v_{i_1},\ldots, v_{i_k} \}$  adjacent to $v_i$. Then since $\sin(\theta_j-\theta_i)=\sin(\theta_j)\cos(\theta_i)-\sin(\theta_i)\cos(\theta_j)=x_jy_i-x_iy_j$, at vertex $v_i$, we have the equation
\[
f_i=\sum\limits_{v_j \in G_i} x_jy_i-x_iy_j
\]
For a graph $G$ on vertex set $\{0,\ldots, n-1\}$, define $I_G = \langle f_0, \ldots, f_{n-1} \rangle$, with the $f_i$ as above.
\end{defn}

\begin{exm}\label{ex1}
For the graph $G$ on five vertices depicted below, the ideal $I_G$ is given by
\[ 
\begin{array}{ccc}
I_G &= &  \langle x_{2}y_{0}+x_{3}y_{0}+x_{4}y_{0}-x_{0}y_{2}-x_{0}y_{3}-x_{0}y_{4},\\
  & &-x_{2}y_{0}-x_{2}y_{1}+x_{0}y_{2}+x_{1}y_{2},\\
  & &-x_{3}y_{0}-x_{3}y_{1}+x_{0}y_{3}+x_{1}y_{3},\\
  & &-x_{4}y_{0}-x_{4}y_{1}+x_{0}y_{4}+x_{1}y_{4},\\
  & &x_{2}y_{1}+x_{3}y_{1}+x_{4}y_{1}-x_{1}y_{2}-x_{1}y_{3}-x_{1}y_{4}\rangle
         \end{array}
\]
\begin{figure}[h]
\vskip -.2in
\includegraphics[width=6in]{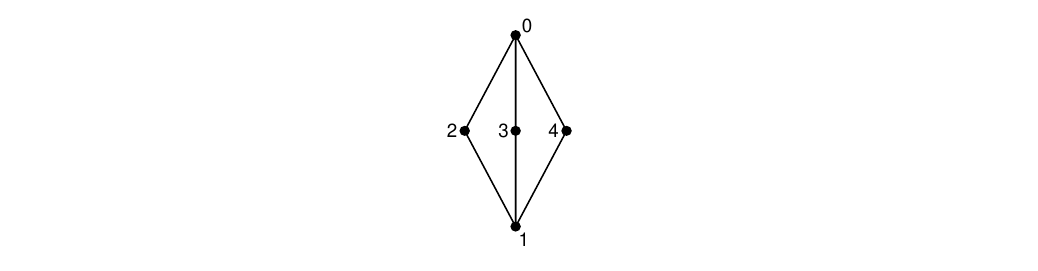} %6.pdf
\vskip -.2in
\caption{The graph $G$ and ideal $I_G$}
\end{figure}

%\vskip -2in
\end{exm}

\begin{defn}\label{KuramotoAlg}
The {\em Kuramoto variety} is
the set of common solutions in $\mathbb{C}^{2n}$ to the
equations 
\[ f_0 = f_1 = \ldots f_{n-1} = x_0^2 + y_0^2 - 1 = \ldots = x_{n-1}^2 + y_{n-1}^2 - 1 = 0.\]
The set of polynomials above defines an algebraic object,
the {\em Kuramoto oscillator ideal}
\[
I_K = I_\theta + I_G,
\]
which is an algebraic encoding of Equation~\ref{homogKsols}.
The common zeros of all polynomials in the ideal $I_K$
is the Kuramoto variety which we denote by
$\V(I_K)$. 
\end{defn}

\noindent The next section is devoted to the study of algebraic properties of the ideal $I_G$.
\section{Algebra of Kuramoto oscillators: determinantal equations}
\noindent As in \S1, for a graph $G$ and vertex $v_i \in G$, let $G_i$ denote the set of vertices adjacent to $v_i$, and suppose $G_i= \{v_{i_1}, \ldots, v_{i_k}\}$. To simplify notation, let 
\[
\begin{array}{c}
x_{i\bullet} \mbox{ denote the sum }x_{i_1}+ \cdots +x_{i_k}\\
y_{i\bullet} \mbox{ denote the sum }y_{i_1}+ \cdots +y_{i_k}
\end{array}
\]
\begin{lem}\label{IGlem}
With notation as above, 
\[
I_G = \Big \langle \det \left[ \!
\begin{array}{cc}
x_0 & x_{0\bullet}\\
y_0 & y_{0\bullet}
\end{array}\! \right], \ldots,
\det \left[ \!
\begin{array}{cc}
x_{n-1} & x_{n-1\bullet}\\
y_{n-1} & y_{n-1\bullet}
\end{array}\! \right] \Big \rangle,
\]
and $I_G$ has $V-1$ minimal generators $($rather than the expected number $V)$.
\end{lem}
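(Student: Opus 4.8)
The plan is to treat the two assertions separately. For the determinantal description I would simply expand: writing $f_i = \sum_{v_j \in G_i}(x_j y_i - x_i y_j)$ and collecting the two groups of terms gives $f_i = y_i x_{i\bullet} - x_i y_{i\bullet}$, which is exactly $-\det\left[\begin{smallmatrix} x_i & x_{i\bullet} \\ y_i & y_{i\bullet}\end{smallmatrix}\right]$. Since replacing a generator by its negative does not change the ideal, this identifies each $f_i$ with the displayed $2 \times 2$ minor and proves the first claim (one can check against the generators in Example~\ref{ex1}, where $G_0 = \{2,3,4\}$).

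For the count of minimal generators the key structural point is that every $f_i$ is homogeneous of degree $2$, so $I_G$ is generated in a single degree. I would then invoke graded Nakayama: because the product of $I_G$ with the irrelevant ideal $(x_0,\ldots,y_{n-1})$ lives in degrees $\ge 3$, the minimal number of generators of $I_G$ equals $\dim_{\mathbb{C}}(I_G)_2$, and any degree-two element of $I_G$ is a constant-coefficient combination of the $f_i$, so $(I_G)_2$ is precisely the $\mathbb{C}$-linear span of $\{f_0,\ldots,f_{n-1}\}$. Thus the entire statement reduces to computing the dimension of this span, equivalently to determining all scalar relations $\sum_i c_i f_i = 0$.

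To analyze these relations I would track monomials edge by edge. For an edge $\{i,j\}$ the monomial $x_j y_i$ occurs in $f_i$ with coefficient $+1$, in $f_j$ with coefficient $-1$, and (because every monomial of $f_k$ has $x_k$ or $y_k$ as a factor) in no other $f_k$; for a non-edge $\{i,j\}$ it occurs in none of them. Hence in $\sum_i c_i f_i$ the coefficient of $x_j y_i$ is $c_i - c_j$ for each edge, and these must all vanish in a relation, forcing $c_i = c_j$ across every edge. Since every SCT graph is connected, all $c_i$ coincide, so the relation is a scalar multiple of $\sum_i f_i = 0$; that this single relation genuinely holds follows from the same edge-by-edge cancellation. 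Therefore the span has dimension exactly $n-1$, giving $V-1$ minimal generators.

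The only real subtlety is this last step, where one needs simultaneously that at least one relation exists (the vanishing of $\sum_i f_i$) and that no further relations are hidden among the $f_i$; both are consequences of the clean bookkeeping that each off-diagonal monomial $x_j y_i$ belongs to exactly two of the equations, together with connectivity. It is worth flagging that connectivity is used essentially: for a graph with $c$ components the argument yields $V-c$ minimal generators, so the standing hypothesis that our graphs are connected is precisely what pins the count at $V-1$.
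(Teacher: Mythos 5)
Your argument is correct, and the first half (expanding the determinant) coincides with the paper's. For the generator count, however, you take a genuinely different route. The paper writes the generators as a matrix product $[Y]\cdot(x_1,\ldots,x_n)^T=(f_1,\ldots,f_n)^T$, observes that specializing all $y_i=1$ turns $[Y]$ into $-L_G$ (the graph Laplacian, of rank $n-1$), and uses the fact that rank drop is a Zariski closed condition together with the identity $\sum_i f_i=0$ to conclude there is exactly one dependency. You instead reduce via graded Nakayama to computing $\dim_{\mathbb{C}}(I_G)_2$ and then determine all scalar relations $\sum_i c_i f_i=0$ by tracking, for each edge $\{i,j\}$, the monomial $x_jy_i$, which appears only in $f_i$ and $f_j$ with opposite signs; connectivity then forces all $c_i$ equal. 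Your version has two advantages: it makes explicit the reduction from ``minimal generators'' to ``linear independence of the degree-two forms'' (a step the paper leaves implicit), and it isolates exactly where connectivity enters, yielding the refinement that a graph with $c$ components gives $V-c$ minimal generators. The paper's version has the advantage of introducing the matrix $[Y]$ and its Laplacian specialization, which are reused later (e.g.\ in the proof of Theorem~\ref{SegreAssPrime}, where the Jacobian at the all-ones point is identified with $[-L_G\,|\,L_G]$). Both arguments are sound; note only that your claim that each off-diagonal monomial ``belongs to exactly two of the equations'' is doing the same work as the paper's observation that $[Y]_{\mathbf{y}=1}=-L_G$, since the Laplacian encodes precisely that edge-by-edge bookkeeping.
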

\begin{proof}
Since 
\[
\det \left[ \!
\begin{array}{cc}
x_i & x_{i\bullet}\\
y_i & y_{i\bullet}
\end{array}\! \right] = \sum\limits_{v_j \in G_i} x_iy_j-x_jy_i
\]
the first result follows from Definition~\ref{IGdef}. To see that $I_G$ has $V-1$ minimal generators, first observe that 
%, %note that with the notation in Definition~\ref{IGdef}, 
%note that if there is an edge connecting $v_i$ to $v_j$, then %$x_iy_j-x_jy_i$ appears positively in the generator $f_i$ of $I_G$, and negatively in the generator $f_j$. 
$f_0+f_1+\cdots +f_{n-1}=0$, so one of the $f_i$ is redundant. To streamline notation let $n=V$, so $I_G$ has  generators (including the one non-minimal generator) $\{f_1,\ldots, f_n\}$, which we write as a matrix product
\begin{equation}\label{JacLemma}
\left[ \!
\begin{array}{ccc}
\frac{\partial(f_1)}{\partial x_1} & \cdots &\frac{\partial(f_1)}{\partial x_n } \\
\vdots & \ddots & \vdots\\
\frac{\partial(f_{n})}{\partial x_1} & \cdots& \frac{\partial(f_{n})}{\partial x_n} 
\end{array}\! \right] \cdot
\left[ \!
\begin{array}{c}
x_1\\
\vdots \\
x_n 
\end{array}\! \right]
=
\left[ \!
\begin{array}{c}
f_1\\
\vdots \\
f_{n} 
\end{array}\! \right]
\end{equation}
\noindent Let $[Y]$ denote the matrix on the left of Equation~\ref{JacLemma}; the entries of $[Y]$ are: 
\begin{equation}\label{Ymat}
[Y]_{ij}= \begin{cases}
  -\sum\limits_{v_j \in G_i}y_j & \mbox{ if } i=j \\
   y_j & \mbox{ if } i \ne j \mbox{ and } v_j \in G_i\\
   0   & \mbox{otherwise}
\end{cases}
\end{equation}
Let $[Y]_{{\bf y}=1}$ be the matrix obtained by setting $\{y_1 =y_2=\cdots =y_n=1\}$. Then  
\[
[Y]_{{\bf y}=1} = -L_G,
\]
where $L_G$ is the graph Laplacian, which has rank $n-1$ (e.g. Lemma 3.4.5 of \cite{TDAbook}). As $f_0+f_1+\cdots +f_{n-1}=0$, the rank of $[Y]$ is at most $n-1$, and since rank drop is a Zariski closed condition, the argument above shows the rank of $[Y]$ is exactly $n-1$. Hence there is exactly one dependency on the $n$ generators of $I_G$, as we wanted to show. \end{proof}
\subsection{The Segre variety}
In classical algebraic geometry, the Segre variety (\cite{E}, Exercise 13.14), is the image of the map
\[
\mathbb{S}_{s,t}: \pp^s \times \pp^t \longrightarrow \pp^{st+s+t},
\]
defined by 
\[
[a_0:\cdots:a_s] \times [b_0:\cdots:b_t] \mapsto [a_0b_0: a_0b_1:\cdots: a_0b_t:a_1b_0: \cdots :a_sb_t].
\]
The Segre variety with $s=1$ and $t=V-1$ (henceforth written simply as $\Sigma$) plays an important role in the study of Kuramoto oscillators: in \S 2.4 we show that all standard solutions lie on $\Sigma$. 

\noindent For $\pp^1 \times \pp^{V-1}$, the target space of the Segre map $\mathbb{S}_{1,V-1}$ defined above is $\pp^{2V-1}$, and the ideal $I_\Sigma$ of polynomials vanishing on the image $\Sigma$ is
\begin{equation}\label{SegreDef}
I_\Sigma = I_2\left[ \!
\begin{array}{cccc}
x_0 & x_1& \cdots x_{V-1}\\
y_0 & y_1& \cdots y_{V-1}
\end{array}\! \right], \mbox{ where } I_2 \mbox{ denotes the }2 \times 2 \mbox{ minors of the matrix}.
\end{equation}

\noindent Since $I_\Sigma$ is generated by the ${ V \choose 2}$ polynomials 
\[
\det \left[ \!
\begin{array}{cc}
x_{i} & x_{j}\\
y_{i} & y_{j}
\end{array}\! \right]\!,
\]
this means that the $V-1$ generators of $I_G$ are sums of the generators of $I_\Sigma$. %This turns out to have important geometric 5consequences for the set of solutions of the equations defining 
To analyze $I_G$ more deeply, we need some algebraic geometry.
\subsection{Algebraic geometry interlude}
\noindent In this section, we describe some key algebro-geometric quantities associated to the ideal $I_G$; see \cite{E} or \cite{S} for additional details.

\begin{defn}\label{codimDef} For $I$ an ideal in a polynomial ring $R = \KK[x_1, \ldots, x_m]$, the 
 {\em complementary dimension} (or codimension) of $I$ is 
\[
\codim(I) = m-\dim \V(I),
\]
where $\V(I) \subset \KK^m$ is the set of common zeros of the polynomials defining $I$.
\end{defn}

\begin{exm}\label{CI}
To gain geometric intuition for the definition above, notice that 
a minimal generator $f \in I$ has solution set $\V(f)$ which is a hypersurface. If $I$ is minimally generated by $\{f_1,\ldots, f_d\}$ and $\codim(I)=d$, then each hypersurface $\V(f_i)$ drops the dimension of the solution space $\V(I)$ by one, and $I$ is called a {\em complete intersection}. In general, $\codim(I)$ is $\le $ the number of generators of $I$, and the solution space $\V(I)$ is called the variety of the ideal $I$.
\end{exm}

\noindent After dehomogenizing (setting some variable equal to one) the system of equations $I_G$, in order for the Kuramoto variety to consist of a finite number of solutions, the codimension of $I_K$ must be $2V-1$. Since there are exactly $V$ equations in $I_\theta$, a necessary condition for $\V(I_K)$ to be finite is that $\codim(I_G) = V-1$. By Lemma~\ref{IGlem}, $I_G$ has exactly $V-1$ generators, so we have proved
\begin{thm} $\V(I_K)$ has a finite set of solutions only if $I_G$ is a complete intersection.
\end{thm}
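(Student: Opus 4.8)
The plan is to reduce the claim to a single codimension count and then play the subadditivity of codimension against the generator count from Lemma~\ref{IGlem}. First I would pin down the meaning of ``finite set of solutions.'' Because of the rotational symmetry already noted, $\V(I_K)\subset\KK^{2V}$ always contains the circle obtained by rotating any given equilibrium, so it is never literally finite; the correct reading is that the solutions are finite modulo this symmetry, equivalently $\dim\V(I_K)=1$, i.e. $\codim(I_K)=2V-1$. This is exactly the numerology recorded in the paragraph preceding the theorem, so I would take $\codim(I_K)=2V-1$ as the hypothesis to be unwound, being explicit that any positive-dimensional component would force $\dim\V(I_K)>1$ and hence strict inequality.

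Next I would record two codimension facts. The ideal $I_\theta$ is generated by the $V$ polynomials $x_i^2+y_i^2-1$, each in a disjoint pair of variables, so $\V(I_\theta)$ is a product of $V$ plane conics and has dimension $V$ in $\KK^{2V}$; thus $\codim(I_\theta)=V$ and $I_\theta$ is itself a complete intersection. On the other side, Lemma~\ref{IGlem} gives that $I_G$ has exactly $V-1$ minimal generators, so by Krull's height theorem $\codim(I_G)\le V-1$.

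The heart of the argument is subadditivity of codimension under sums. Since $\V(I_K)=\V(I_\theta)\cap\V(I_G)$ inside the smooth ambient space $\KK^{2V}$, the affine dimension theorem bounds the codimension of every component of the intersection, giving
\[
\codim(I_K)\;\le\;\codim(I_\theta)+\codim(I_G)\;=\;V+\codim(I_G).
\]
Feeding in $\codim(I_K)=2V-1$ yields $\codim(I_G)\ge V-1$. Combined with the reverse bound $\codim(I_G)\le V-1$ from the generator count, I conclude $\codim(I_G)=V-1$, which equals the number of minimal generators; by the definition in Example~\ref{CI} this says precisely that $I_G$ is a complete intersection.

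The step I expect to be the main obstacle is the subadditivity inequality itself: it is not formal for arbitrary ambient rings and relies on $\KK^{2V}$ being smooth (one proves it by reduction to the diagonal, or equivalently by passing to the Cohen--Macaulay quotient $\KK[x,y]/I_\theta$, where the heights of the $V-1$ generators of $I_G$ add correctly onto those of the complete intersection $I_\theta$). I would state it carefully in the affine setting and verify the ambient hypotheses, since the whole ``only if'' direction hinges on it; the only other point needing care is the reduction of finiteness to $\codim(I_K)=2V-1$, where I would be explicit about the rotational action.
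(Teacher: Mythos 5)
Your argument is correct and is essentially the paper's own proof: the paper likewise reduces finiteness (after dehomogenizing away the rotational symmetry) to $\codim(I_K)=2V-1$, observes that the $V$ equations of $I_\theta$ then force $\codim(I_G)\ge V-1$, and concludes from the $V-1$ minimal generators of Lemma~\ref{IGlem} that $I_G$ is a complete intersection. You have simply made explicit the Krull height bound and the subadditivity-of-codimension step that the paper leaves implicit.
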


 \noindent But in general, $I_G$ is not a complete intersection, as we will see in Example~\ref{3doubles}.
 
\begin{defn}\label{MinimalPrime}
An ideal $P \subsetneq R$ is called prime if $fg \in P$ implies $f \in P$ or $g \in P$ (or both). For an ideal $I \subset R$, a prime ideal $P$ containing $I$ is a {\em minimal prime} of $I$ if there is no other prime ideal $Q$ such that $I \subseteq Q \subsetneq P$.
\end{defn}

\noindent The set of common zeros $\V(P)$ of a prime ideal $P$ is {\em irreducible}:  $\V(P) \ne \V(I_1) \cup \V(I_2)$, for any non-empty $\V(I_i) \subsetneq \V(P)$.
For $I \subset R$ the variety $\V(I)$ has a unique finite minimal {\em irreducible decomposition}
\[ \V(I) = \bigcup\limits_{i=1}^d \V(P_i), \mbox{ where the }P_i \mbox{ are prime}.\]
The $P_i$ above are the {\em minimal primes} of $I$, and the $\V(P_i)$ the {\em irreducible components} of $\V(I)$.

\begin{exm}\label{3doubles}
For the graph $G$ on five vertices appearing in Example~\ref{ex1}, 
the ideal $I_G$ is the intersection $I_G = P_1 \cap P_2 \cap P_3$, where the
$P_i$'s are prime ideals, described below.
\[
\begin{array}{ccc}
P_1 & = &   
\begin{array}{c}
\langle y_{0}+y_{1},\\
        \;x_{0}+x_{1}, \\
        x_{2}y_{1}+x_{3}y_{1}+x_{4}y_{1}-x_{1}y_{2}-x_{1}y_{3}-x_{1}y_{4}\rangle 
  \end{array}\\
 & & \\
P_2 & = & 
\begin{array}{c} 
\langle y_{2}+y_{3}+y_{4},\\
        \;x_{2}+x_{3}+x_{4},\\
        x_{4}y_{3}-x_{3}y_{4},\,x_{4}y_{0}+x_{4}y_{1}-x_{0}y_{4}-x_{1}y_{4},\,x_{3}y_{0}+x_{3}y_{1}-x_{0}y_{3}-x_{1}y_{3}\rangle
 \end{array}\\
 & & \\
 P_3 & = & \mbox{the Segre ideal } I_\Sigma \mbox{ of Equation }\ref{SegreDef}. 
\end{array}
\]
Lemma~\ref{IGlem} is the key to analyzing the irreducible decomposition: write the generators of $I_G$ as 
\[
\begin{array}{ccc}
I_G &=& \Big \langle \det \left[ \!
\begin{array}{cc}
x_2 &x_0 +x_1\\
y_2 &y_0 +y_1
\end{array}\! \right], 
\det \left[ \!
\begin{array}{cc}
x_3 &x_0 +x_1\\
y_3 &y_0 +y_1
\end{array}\! \right], 
\det \left[ \!
\begin{array}{cc}
x_4 &x_0 +x_1\\
y_4 &y_0 +y_1
\end{array}\! \right], \cr\\
& & \!\!\!\!\! \!\!\!\!\! \!\!\!\!\! \!\!  \!\!\det \left[ \!
\begin{array}{cc}
x_0 & x_2+x_3+x_4\\
y_0 & y_2+y_3+y_4
\end{array}\! \right],
\det \left[ \!
\begin{array}{cc}
x_1 & x_2+x_3+x_4\\
y_1& y_2+y_3+y_4
\end{array}\! \right]
\Big \rangle.
\end{array}
\]
Component (1) has codimension $3 < V-1 = 4$, so $I_G$ is not a complete intersection. It is easy to see this from the determinantal description of the generators above: when 
\begin{equation}\label{2lins}
y_0+y_1=0 = x_0+x_1,
\end{equation} 
the first three determinantal equations all vanish. Summing the two remaining generators we can use Equation~\ref{2lins} to eliminate one of them; the resulting solutions are the zeros of Component (1). 
\end{exm}
\subsection{Low codimension components of $I_G$}
It turns out that Example~\ref{3doubles} provides insight into obtaining a more general description of irreducible components of $I_G$. In \cite{canale}, Canale-Monz{\'o}n define two vertices as {\em twins} if they have the same set of adjacent vertices. This suggests looking at triplets, quadruplets, and so on, so we define:
\begin{defn}\label{Klet}
A k-let is a set $S$ of $k$ distinct vertices of $G$ such that $v, w \in S \longrightarrow G_v = G_w$. 
\end{defn}
\begin{thm}\label{PDsolns}
If $G$ has a k-let with $k \ge 3$, then $\codim(I_G) \le V-k+1$.
\end{thm}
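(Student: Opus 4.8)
The plan is to bound $\codim(I_G)$ from above by exhibiting an explicit subvariety of $\V(I_G)$ of dimension at least $V+k-1$, generalizing the component $P_1$ from Example~\ref{3doubles}. Let $S = \{v_{s_1},\ldots,v_{s_k}\}$ be the $k$-let and let $N$ denote their common neighborhood $G_{s_1}=\cdots=G_{s_k}$; since $G$ is simple there are no loops, so $N\cap S=\emptyset$. Write $x_N=\sum_{v_j\in N}x_j$ and $y_N=\sum_{v_j\in N}y_j$. By Lemma~\ref{IGlem}, for each $v_i\in S$ the generator $f_i$ equals, up to sign, $\det\left[\begin{smallmatrix} x_i & x_N\\ y_i & y_N\end{smallmatrix}\right] = x_iy_N - y_ix_N$, precisely because every vertex of $S$ has the same neighborhood $N$.

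First I would pass to the linear subspace $x_N = y_N = 0$. On this subspace every $f_i$ with $v_i\in S$ vanishes identically, so only the $V-k$ generators $\{f_i : v_i\notin S\}$ impose further conditions. Consequently the ideal $J = \langle x_N,\, y_N\rangle + \langle f_i : v_i\notin S\rangle$ satisfies $\V(J)\subseteq\V(I_G)$, and $\V(J)$ is nonempty since it contains the origin.

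The \emph{key step}, and the one that removes an otherwise spurious $+1$ in the codimension, is to observe that one of the remaining $f_i$ is redundant modulo $\langle x_N,y_N\rangle$. Summing over $S$ gives $\sum_{v_i\in S} f_i = x_Sy_N - y_Sx_N \in \langle x_N, y_N\rangle$, where $x_S=\sum_{v_i\in S}x_i$ and $y_S = \sum_{v_i\in S}y_i$. Combining this with the identity $f_0+f_1+\cdots+f_{n-1} = 0$ from Lemma~\ref{IGlem} yields $\sum_{v_i\notin S} f_i \in \langle x_N, y_N\rangle$, so one generator $f_i$ with $v_i\notin S$ lies in the ideal generated by $x_N$, $y_N$, and the remaining $f_j$. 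Hence $J$ is generated by at most $2 + (V-k-1) = V-k+1$ polynomials. (For the five-vertex example this recovers $P_1 = \langle x_0+x_1,\, y_0+y_1,\, f_1\rangle$ exactly.)

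Finally I would invoke Krull's height bound: every irreducible component of the nonempty variety $\V(J)$, cut out by $V-k+1$ equations in $\KK^{2V}$, has dimension at least $2V-(V-k+1)=V+k-1$. Since $\V(J)\subseteq\V(I_G)$ we obtain $\dim\V(I_G)\ge V+k-1$, and therefore $\codim(I_G)\le V-k+1$. The only genuine obstacle is spotting the dependency in the third step; everything else is routine. Note also that the hypothesis $k\ge 3$ is exactly what makes the bound informative, since then $V-k+1 \le V-2 < V-1$, certifying by Lemma~\ref{IGlem} and Example~\ref{CI} that $I_G$ fails to be a complete intersection.
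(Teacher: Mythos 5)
Your proposal is correct and follows essentially the same route as the paper: both arguments contain $I_G$ in an ideal generated by the two linear forms $x_N=x_{0\bullet}$, $y_N=y_{0\bullet}$ together with all but one of the generators $f_i$ for $v_i\notin S$, giving $V-k+1$ generators and hence codimension at most $V-k+1$ by the Krull bound. The only (cosmetic) difference is that the paper discards the redundant generator at the outset via the relation $f_0+\cdots+f_{n-1}=0$ from Lemma~\ref{IGlem}, whereas you re-derive that same redundancy modulo $\langle x_N,y_N\rangle$.
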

\begin{proof}
It suffices to prove that $I_G$ 
is contained in an ideal of codimension at most $V-k+1$.
First, after relabelling, we may suppose our $k-let$ is $\{v_0, \ldots, v_{k-1}\}$, hence $G_{0} = G_{1} = \cdots G_{k-1}$. So the determinantal equations for $I_G$ take the form
\[
\begin{array}{ccc}
I_G &=& \Big \langle \det \left[ \!
\begin{array}{cc}
x_0 &x_{0 \bullet}\\
y_0 &y_{0 \bullet}
\end{array}\! \right], 
\det \left[ \!
\begin{array}{cc}
x_1 &x_{1 \bullet}\\
y_1 &y_{1 \bullet}
\end{array}\! \right], \ldots,
\det \left[ \!
\begin{array}{cc}
x_{k-1} &x_{k-1 \bullet}\\
y_{k-1} &y_{k-1 \bullet}
\end{array}\! \right], f_k,\ldots, f_{V-2}
\Big \rangle.
\end{array}
\]
Note that we have used Lemma~\ref{IGlem} to reduce to $V-1$ generators.
Since the first $k$ vertices are a k-let, for $i \in \{0,\ldots, k-1\}$ the linear forms $x_{i \bullet}$ are equal, and similarly for $y_{i \bullet}$. Therefore the vanishing of the two linear forms $\{x_{0 \bullet}, y_{0 \bullet}\}$ causes the first $k$-equations defining $I_G$ to vanish. So 
\[ I_G \subset \langle x_{0 \bullet}, y_{0 \bullet}, f_{k}, \ldots, f_{V-2} \rangle,
\]
an ideal with $V-k+1$ generators,
and hence of codimension at most $V-k+1$.

\end{proof}
\noindent Example~\ref{3doubles} contains a 3-let, and $V=5$, so  $I_G$ is of codimension $\le 5-3+1 = 3$, hence $\dim(\V(I_G))\ge 7$.  Setting $x_0 = 1$ and $y_0 = 0$, and adding in the remaining four trigonometric equations $x_i^2 + y_i^2 - 1 = 0$,  yields (at least) a one dimensional set of solutions.

\begin{cor}\label{IKdim}
If $G$ has a k-let with $k \ge 3$, then $\V(I_K)$ has positive dimension. 
\end{cor}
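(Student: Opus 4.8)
The plan is to deduce the corollary from Theorem~\ref{PDsolns} by a dimension count, treating the $V$ trigonometric relations in $I_\theta$ as hypersurface cuts. First I would record that, since $\V(I_G) \subset \C^{2V}$ and Theorem~\ref{PDsolns} gives $\codim(I_G) \le V-k+1$, we have $\dim \V(I_G) \ge 2V - (V-k+1) = V+k-1$. Because $I_K = I_G + I_\theta$ and $I_\theta = \langle x_0^2+y_0^2-1, \ldots, x_{V-1}^2+y_{V-1}^2-1\rangle$ is generated by exactly $V$ quadrics, the variety $\V(I_K)$ is obtained from $\V(I_G)$ by intersecting with $V$ hypersurfaces. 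By Krull's principal ideal theorem each such cut lowers the dimension of a component by at most one, so on a top-dimensional component of $\V(I_G)$ one expects $\dim \V(I_K) \ge (V+k-1) - V = k-1 \ge 2$. This matches the bookkeeping in the paragraph following Theorem~\ref{PDsolns}: after using the rotational symmetry to set $(x_0,y_0)=(1,0)$ and adding only the remaining $V-1$ quadrics, the same count yields a solution set of dimension at least $k-2 \ge 1$.

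The hard part will be nonemptiness: over an affine base, intersecting with a hypersurface can leave the empty set, so the inequality above only bounds the dimension of those components that actually survive, and one must exhibit at least one. To do this I would use the explicit overideal from the proof of Theorem~\ref{PDsolns}. Relabel the $k$-let as $\{v_0,\ldots,v_{k-1}\}$ with common neighborhood $N$; since $G$ is simple the $k$-let is independent, and since $G$ is SCT we have $|N| \ge 2$. The twin equations are $x_i\,y_{0\bullet} - x_{0\bullet}\,y_i = 0$, so imposing $x_{0\bullet}=y_{0\bullet}=0$ (the neighbor coordinates summing to zero, possible precisely because $|N|\ge 2$) forces all $k$ twin equations to hold identically. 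The twin variables enter the remaining equations only through the sums $\sum_{i<k} x_i$ and $\sum_{i<k} y_i$, so restricting to the slice where both of these vanish decouples the twins entirely. On that slice the twin variables are free on their unit conics $x_i^2+y_i^2=1$ subject to the two linear sum-conditions, a \emph{nonempty} family of dimension at least $k-2 \ge 1$ since $k \ge 3$ (for instance the regular-polygon configuration of $k$ unit vectors summing to zero, together with its rotations).

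It then remains to supply a single compatible configuration for the vertices in $N$ and in the remainder $R$: unit vectors on their conics, with those indexed by $N$ summing to zero, and satisfying the induced Kuramoto equations of $G[N\cup R]$ (which on the chosen slice are exactly the equilibrium equations of the smaller graph). Producing such a base point is the one genuinely nontrivial step, and I expect it to be the main obstacle, since the fully synchronized configuration---the obvious point of $\V(I_K)$---fails the constraint $x_{0\bullet}=y_{0\bullet}=0$. I would settle it either by the explicit zero-sum construction on $N$ combined with a synchronized choice on $R$, or, more robustly, by applying the fiber-dimension theorem to the projection $\V(I_G) \to \C^{V}$ sending a point to $(x_i^2+y_i^2)_{i}$: its fiber over $(1,\ldots,1)$ is exactly $\V(I_K)$, and since $\V(I_G)$ is a cone this map has conical image, reducing nonemptiness to locating one point of the chosen component with all $x_i^2+y_i^2$ equal and nonzero. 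Once one such point is in hand, the dimension count of the first paragraph upgrades it to a positive-dimensional component, proving the corollary.
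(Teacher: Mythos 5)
Your first paragraph is exactly the paper's own justification of Corollary~\ref{IKdim}: the corollary is stated immediately after the observation that a $k$-let forces $\codim(I_G)\le V-k+1$, hence $\dim\V(I_G)\ge V+k-1$, and that normalizing $(x_0,y_0)=(1,0)$ and adding the remaining $V-1$ trigonometric quadrics leaves, by the Krull-type count, a solution set of dimension at least $k-2\ge 1$. On that main line you and the paper coincide.

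What you add, and the paper does not, is the observation that this count only bounds the dimension of components that actually survive the affine cuts, so one must still exhibit a point of $\V(I_K)$ on the low-codimension locus. That concern is legitimate: $\V(I_\theta)$ is not a cone, so projective nonemptiness does not apply verbatim, and the synchronized solution does not satisfy $x_{0\bullet}=y_{0\bullet}=0$. Your structural observation --- that on this locus the twin equations vanish identically and the twins enter the remaining equations only through $\sum_{i<k}x_i$ and $\sum_{i<k}y_i$, so a single point of $\V(I_K)$ with $x_{0\bullet}=y_{0\bullet}=0$ immediately sweeps out a family of dimension at least $k-2$ --- is correct and is the right way to make the positive-dimensionality concrete. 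But your proposal stops at precisely the hard step: producing one equilibrium whose common-neighbor coordinates sum to zero. Neither of your suggested routes does this yet (a synchronized assignment on the rest of the graph need not satisfy the induced equilibrium equations once the neighbors are forced to sum to zero, and the fiber-dimension/cone argument merely restates the same existence question). So the verdict is: identical to the paper on the part the paper actually argues, a correctly identified subtlety beyond that, and an unfinished repair. If you want to close it, verify it first on a concrete family such as $K_{k,m}$ with $k\ge 3$, $m\ge 2$, where placing the twins at the $k$-th roots of unity and the neighbors at the $m$-th roots of unity makes every equation vanish by inspection, and then see how much of that construction survives for a general SCT graph with a $k$-let.
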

%\subsection{The Associated Prime of the Segre Variety}
%\noindent It is tempting to hope that Theorem~\ref{PDsolns} gives a %complete description of the primary decomposition of $I_G$. The next %example shows things are more subtle.
%\begin{exm}\label{AllXsolns}
%{\tt TO DO--include example with a pure x variable component of %primary decomp. Add figure}
%\end{exm}
\subsection{The Segre variety and standard solutions}
\noindent All standard solutions lie on the Segre variety $\Sigma$ with $s=1$ and $t=V-1$. This follows because by a change of variables we can assume that $\theta_0^*=0$. The standard solutions satisfy $\theta_i^*-\theta_j^* \in \{0,\pi\}$, so the $x_i$ are all zero. Since $\Sigma$ is defined by the ideal appearing in Equation~\ref{SegreDef}, the inclusion of the standard solutions in $\Sigma$ follows. The codimension of $\Sigma$ is $V-1$ (see Chapter 2 of \cite{CLS}), so adding the $V$ equations of $I_\theta$ to $I_\Sigma$ produces an ideal of codimension at most $2V-1$. Using the change of variables $\theta_0^*=0$ above eliminates the equation $x_0^2+y_0^2-1$ as well as the variables $x_0$ and $y_0$, yielding a system of $2V-2$ equations in $2V-2$ unknowns. For the next theorem, recall that a projective variety $X \subseteq {\mathbb P}^k$ also defines as an affine variety (known as the {\em affine cone}) in ${\mathbb C}^{k+1}$.
\pagebreak

\begin{thm}\label{SegreAssPrime} 
The affine cone over $\Sigma$ is an irreducible component of $\V(I_G) \subset \mathbb{C}^{2V}$.
\end{thm}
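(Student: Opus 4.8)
The plan is to realize the affine cone $\V(I_\Sigma)$ as the unique irreducible component of $\V(I_G)$ passing through a single, well-chosen smooth point, via the Jacobian criterion. Several ingredients come for free: the Segre variety $\Sigma$ is irreducible and its ideal $I_\Sigma$ of $2\times 2$ minors is prime, so its affine cone $\V(I_\Sigma)\subset\C^{2V}$ is irreducible, of dimension $V+1$ because $\codim\Sigma = V-1$. Furthermore, as noted just after Equation~\ref{SegreDef}, the $V-1$ generators of $I_G$ are sums of the $2\times 2$ minors generating $I_\Sigma$, so $I_G\subseteq I_\Sigma$ and hence $\V(I_\Sigma)\subseteq\V(I_G)$. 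It therefore suffices to show that $\V(I_\Sigma)$ is a maximal irreducible subvariety of $\V(I_G)$, i.e.\ a component.

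The heart of the argument is a rank computation at the all-ones point $p$, where $x_i=y_i=1$ for every $i$. Every minor $x_iy_j-x_jy_i$ vanishes at $p$, so $p\in\V(I_\Sigma)$ and $p$ is distinct from the cone point. I would compute the full $V\times 2V$ Jacobian of $f_0,\dots,f_{V-1}$ at $p$. Its $x$-block is precisely the matrix $[Y]$ of Equation~\ref{JacLemma} evaluated at $p$; since $p$ has all $y$-coordinates equal to $1$, this block is $[Y]_{\mathbf{y}=1}=-L_G$ by Lemma~\ref{IGlem}. A parallel computation of the derivatives $\partial f_i/\partial y_j$ shows the $y$-block equals $+L_G$ at $p$, so the Jacobian at $p$ is the block matrix $[\,-L_G\mid L_G\,]$. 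Because $G$ is connected, $L_G$ has rank $V-1$, and the two blocks differ only by a sign, so the Jacobian has rank exactly $V-1$ at $p$.

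To conclude, note that the Zariski tangent space to $\V(I_G)$ at $p$ has dimension $2V-(V-1)=V+1$. Combined with the inclusion $\V(I_\Sigma)\subseteq\V(I_G)$ this gives $V+1=\dim\V(I_\Sigma)\le\dim_p\V(I_G)\le\dim T_p\V(I_G)=V+1$, forcing equality throughout, so $\V(I_G)$ is smooth at $p$ of local dimension $V+1$. A smooth point lies on a unique irreducible component $C$ of $\V(I_G)$, with $\dim C=V+1$. Since $\V(I_\Sigma)$ is irreducible, passes through $p$, and is contained in $\V(I_G)$, it lies inside $C$; as both are irreducible of dimension $V+1$ we get $\V(I_\Sigma)=C$, which is the desired conclusion.

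The one genuine obstacle is choosing the evaluation point so that the Jacobian rank is transparent. The all-ones point is exactly right: it lies on the Segre cone, and it specializes the $x$-block of the Jacobian to the negative graph Laplacian, whose rank $V-1$ is guaranteed by the connectivity built into our SCT convention. A less symmetric point, such as one with some $y_i=0$, yields a Jacobian that is not visibly the Laplacian and whose rank is awkward to control; the art is in letting the specialization $[Y]_{\mathbf{y}=1}=-L_G$ of Lemma~\ref{IGlem} carry the computation for us.
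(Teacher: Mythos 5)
Your proposal is correct and follows essentially the same route as the paper: both arguments hinge on evaluating the Jacobian of $I_G$ at the all-ones point, where the two blocks specialize to $\mp L_G$ of rank $V-1$, forcing the tangent space of $\V(I_G)$ there to have dimension $V+1=\dim \V(I_\Sigma)$. Your packaging---smoothness of $\V(I_G)$ at $\mathbf{1}$ plus uniqueness of the irreducible component through a smooth point---is a slightly cleaner way of drawing the same conclusion than the paper's contradiction with a hypothetical intermediate prime $P$, but the mathematical content is identical.
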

\begin{proof}
%By Lemma~\ref{IGlem}, $I_G$ has generators (including the one non-minimal generator) $\%%{f_1,\ldots, f_n\}$, which we write as a matrix product
%\begin{equation}\label{Jacobian}
%\left[ \!
%\begin{array}{ccc}
%\frac{\partial(f_1)}{\partial x_1} & \cdots &\frac{\partial(f_1)}{\partial x_n } \\
%\vdots & \ddots & \vdots\\
%\frac{\partial(f_{n})}{\partial x_1} & \cdots& \frac{\partial(f_{n})}{\partial x_n} 
%\end{array}\! \right] \cdot
%\left[ \!
%\begin{array}{c}
%x_1\\
%\vdots \\
%x_n 
%\end{array}\! \right]
%=
%\left[ \!
%\begin{array}{c}
%f_1\\
%\vdots \\
%f_{n} 
%\end{array}\! \right]
%\end{equation}
%\noindent Let $[Y]$ denote the matrix on the left of Equation~\ref{Jacobian}; the entries of $[Y]$ are: 
%\begin{equation}\label{Ymat}
%[Y]_{ij}= \begin{cases}
%  -\sum\limits_{v_j \in G_i}y_j & \mbox{ if } i=j \\
%   y_j & \mbox{ if } i \ne j \mbox{ and } v_j \in G_i\\
%   0   & \mbox{otherwise}
%\end{cases}
%\end{equation}
%Let $[Y]_{{\bf y}=1}$ be the matrix obtained by setting $\{y_1 =y_2=\cdots =y_n=1\}$. %Then  
%\[
%[Y]_{{\bf y}=1} = -L_G,
%\]
%where $L_G$ is the graph Laplacian, which has rank $n-1$ (see, for example, Lemma 3.4.5 %of \cite{TDAbook}). From Lemma~\ref{IGlem} the rank of $[Y]$ is at most $n-1$, and since %rank drop is a Zariski closed condition, the argument above shows the rank of $[Y]$ is %exactly $n-1$. Hence there is exactly one dependency on the $n$ generators of $I_G$, %which concludes the proof of Lemma~\ref{IGlem}. 

To prove the theorem, it suffices to show that
$I_\Sigma$ is a minimal prime of $I_G$.  Note that as $I_\Sigma$ is a prime ideal with $\codim(I_\Sigma) = n-1$, if $I_G$ is a complete intersection, Example~\ref{CI} shows that 
\[
\codim(I_G)=n-1.
\]
Since $I_G \subseteq I_\Sigma$, this means that $I_\Sigma$ is a minimal prime of $I_G$ when $\codim(I_G)=n-1$. 

Therefore to conclude the proof, we need to show that when $I_G$ has codimension $\le n-2$, $I_\Sigma$ is still minimal over $I_G$. If this were not the case, then there would exist a prime ideal $P$, with $\codim(P) < \codim(I_\Sigma)$ such that 
\[
I_G \subseteq P \subsetneq I_\Sigma \Longrightarrow \Sigma \subsetneq \V(P) \subseteq \V(I_G). 
\]
Our strategy is to consider the tangent spaces of the corresponding varieties. From the containments above, for any point $p \in \V(I_\Sigma)$, 
\[
T_p(\Sigma) \subsetneq T_p(\V(P)) \subseteq T_p(\V(I_G)). 
\]
Since $\codim(P) < \codim(I_\Sigma)$, $\dim \V(P) > \dim \Sigma$. Since $\Sigma$ is smooth, at any point $p$ of $\Sigma$, 
\[
\dim T_p(\Sigma) = \dim \Sigma = n+1 = \dim \ker(J_p(I_\Sigma)),
\]
where we compute the dimension as an affine variety, and $J_p(I)$ denotes the Jacobian of an ideal, evaluated at a point $p$. Hence for any point $p$, 
\[
\rank(J_p(I_\Sigma)) = n-1. 
\]
\vskip .1in
\noindent Now consider the point where $y_i=1=x_i$ for all $i$ which we denote {\bf 1}. Observe that 
\[
J(I_G) = [Y | X],
\]
where $[Y]$ is as in Equation~\ref{Ymat} and $[X]$ is defined by the same formula as $[Y]$, but with $x_i$ substituted for $y_i$. By our earlier computations, 
\[
T_{\bf 1}(\V(I_G)) = \ker(J_{{\bf 1}}(I_G)) = \ker([-L_G | L_G]).
\]
As $\rank(L_G) = n-1$, this means 
\[
T_{{\bf 1}}(\Sigma) = T_{{\bf 1}}(\V(I_G)),
\]
which contradicts the existence of $\V(P)$ properly containing $\Sigma$ and contained in $\V(I_G)$. 
\end{proof}
\begin{cor}
The points of $\Sigma \cap \V(I_\theta)$ lie in the Kuramoto variety $\V(I_K)$.
\end{cor}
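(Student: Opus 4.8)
The plan is to reduce the statement to the single ideal containment $I_G \subseteq I_\Sigma$, which was already recorded just after Equation~\ref{SegreDef}. First I would unwind the definition of the Kuramoto variety: since $I_K = I_\theta + I_G$ by Definition~\ref{KuramotoAlg}, the additivity of the zero-locus operation gives $\V(I_K) = \V(I_\theta) \cap \V(I_G)$. Any point $p \in \Sigma \cap \V(I_\theta)$ already satisfies the trigonometric equations defining $I_\theta$, so all that remains is to verify $p \in \V(I_G)$; equivalently, it suffices to prove the inclusion of varieties $\Sigma \subseteq \V(I_G)$.

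Next I would establish this inclusion at the level of ideals. By Lemma~\ref{IGlem} the $i$-th generator of $I_G$ equals the determinant of the $2\times 2$ matrix with first column $(x_i,y_i)$ and second column $(x_{i\bullet}, y_{i\bullet})$; expanding by linearity in the second column writes it as $\sum_{v_j \in G_i} (x_iy_j - x_jy_i)$, a sum of the $2\times 2$ minors that generate $I_\Sigma$ in Equation~\ref{SegreDef}. Hence every generator of $I_G$ lies in $I_\Sigma$, so $I_G \subseteq I_\Sigma$, and passing to zero loci reverses the inclusion to give $\Sigma = \V(I_\Sigma) \subseteq \V(I_G)$.

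Combining the two steps, any $p \in \Sigma \cap \V(I_\theta)$ satisfies both $p \in \V(I_G)$ and $p \in \V(I_\theta)$, whence $p \in \V(I_\theta) \cap \V(I_G) = \V(I_K)$, which is the assertion. I do not expect a genuine obstacle here: the mathematical content is carried entirely by the containment $I_G \subseteq I_\Sigma$ established above, and everything else is the formal inclusion-reversing behavior of $I \mapsto \V(I)$ together with $\V(I+J) = \V(I)\cap\V(J)$. The only point requiring minor care is to keep the direction of the inclusions straight, since $I_G \subseteq I_\Sigma$ yields $\Sigma \subseteq \V(I_G)$ rather than the reverse.
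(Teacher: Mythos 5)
Your proof is correct, and it follows the same skeleton as the paper's: both arguments come down to the identity $\V(I_K)=\V(I_G)\cap\V(I_\theta)$ together with the inclusion $\Sigma\subseteq\V(I_G)$. The only difference is how that inclusion is justified. The paper's one-line proof cites Theorem~\ref{SegreAssPrime} ($\Sigma$ is an irreducible component of $\V(I_G)$), of which the needed containment is an immediate consequence. You instead derive $I_G\subseteq I_\Sigma$ directly from the determinantal presentation of Lemma~\ref{IGlem} --- each generator of $I_G$ is a sum of the $2\times 2$ minors generating $I_\Sigma$, an observation the paper itself records just after Equation~\ref{SegreDef} --- and then reverse the inclusion on zero loci. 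Your route is strictly more elementary: it shows the corollary needs only the trivial containment $I_G\subseteq I_\Sigma$, not the much harder fact that $I_\Sigma$ is a \emph{minimal} prime of $I_G$. The paper's phrasing buys brevity by leaning on the theorem it has just proved; yours makes the logical dependency lighter. Either is acceptable, and your care about the direction of the inclusion-reversal is exactly the right point to flag.
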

\begin{proof}
By Theorem~\ref{SegreAssPrime}, $\Sigma \subseteq \V(I_G)$; since $\V(I_K) = \V(I_G) \cap \V(I_\theta)$ the result follows.    
\end{proof}
\subsection{Case study: the complete graph $K_n$}
It follows from the description of $I_G$ appearing in Lemma~\ref{IGlem} that when defining the ideal $I_G$, we may include $v$ in the set $G_v$. 
\begin{exm}\label{KnExample}
From our observation above, when $G=K_n$, adding $v$ to each set $G_v$ yields
%\[
%\begin{array}{ccccc}
%L_1 &= &\sum\limits_{i=0}^{n-1}x_i &=& x_{0\bullet} = \cdots =x_{n-1\bullet}\\
%L_2& =& \sum\limits_{i=0}^{n-1}y_i &=& y_{0\bullet} = \cdots =y_{n-1\bullet}.
%\end{array}
%\]
%
\[
L_1 = \sum\limits_{i=0}^{n-1}x_i = x_{0\bullet} = \cdots =x_{n-1\bullet} \mbox{ and } L_2 = \sum\limits_{i=0}^{n-1}y_i = y_{0\bullet} = \cdots =y_{n-1\bullet}.
\]
Hence when $G=K_n$, we have
\begin{equation}\label{KnEquation}
\begin{array}{ccc}
I_{K_n} &= &\Big \langle \det \left[ \!
\begin{array}{cc}
x_0 & L_1\\
y_0 & L_2
\end{array}\! \right], \ldots,
\det \left[ \!
\begin{array}{cc}
x_{n-1} & L_1\\
y_{n-1} & L_2
\end{array}\! \right] \Big \rangle\\& =&\Big \langle \left[ \!
\begin{array}{cc}
L_2 & -L_1
\end{array}\! \right] \cdot \left[ \!
\begin{array}{ccc}
x_0 & \cdots & x_{n-1}\\
y_0 & \cdots & y_{n-1}
\end{array}\! \right] \Big \rangle
\end{array}
\end{equation}
By Theorem~\ref{PDsolns}, $I_L=\langle L_1, L_2 \rangle$ contains $I_{K_n}$, and by Theorem~\ref{SegreAssPrime} $I_\Sigma$ is a minimal prime, hence
\[
I_{K_n} \subseteq I_\Sigma \bigcap I_L. 
\]
\noindent In fact, $\V(I_{K_n}) = \Sigma \bigcup \V(I_L)$, because if $p \in \V(I_L)^c$, then the matrix multiplication in Equation~\ref{KnEquation} composes to zero exactly when $p \in \Sigma$, and otherwise $p \in \V(I_L)$.
\begin{comment}Localizing the above expression at $I_L$, the $2 \times 2$ minors of the left hand matrix are outside $I_L$ hence units, so only $I_L$ remains. Localizing at $I_\Sigma$, the elements of $I_L$ become units. As $I_\Sigma$ is the locus where $[y_0,\ldots, y_{n-1}] = c[x_0,\ldots, x_{n-1}]$ for some unit $c$, the result follows. 
{\tt TODO: we need an argument that there are no embedded primes. Check or add a proof that the standard solutions other than self-sync are all not stable}
\end{comment}
The Jacobian matrix has $(1,1)$ entry $-x_1-x_2-\cdots -x_{n-1}$; if we set $x_0 = 1$ and $y_0 = 0$ $($so $\theta_0 = 0)$, then on $\V(I_L)$ since $L_1=0$ the $(1,1)$ entry is $x_0=1$. So $J(p)$ cannot be negative semidefinite for any $p \in \V(I_L)$. 

In Theorem 4.1 of \cite{taylor}, Taylor proves that $K_n$ synchronizes; the argument above shows that no points $p \in \V(I_L)$ exist such that $J(p)$ has one zero eigenvalue, and all other eigenvalues $<0$, so any linearly stable point is one of the standard solutions lying on $\Sigma$; Taylor proves that the only stable standard solution is when all angles are equal.
\end{exm}
\noindent Example~\ref{KnExample} shows that in the set of isomorphism classes of SCT graphs on $n$ vertices, there will always be at least one graph--$K_n$--having a solution set with positive dimension. How common is this phenomenon? Below is a summary of our computational results. The code and a tutorial on using it is part of the {\tt Macaulay2} software, available at {\tt http://https://macaulay2.com}.
\vskip .1in
\begin{table}[h]
\centering

\begin{tabular}{|c|c|c|c|c|}
\hline $\stackrel {\sharp}{\mbox{vertices}}$ & $\stackrel{\sharp \mbox{ iso. classes}}{\mbox{of SCT graphs}}$ & $\stackrel{\sharp \mbox{ SCT graphs with}}{\codim(I_G)\le n\!-\!2}$ & $\stackrel{\sharp \mbox{ SCT graphs}}{\mbox{with exotic solns.}}$ &$\stackrel{\sharp \mbox{ SCT graphs with }}{\stackrel{\mbox{exotic solns. and }}{\codim(I_G)\le n\!-\!2}}$\\
\hline $4$ & $3$       & $1$     & $0$& 0\\
\hline $5$ & $11$   & $4$  & $1$ &0\\
\hline $6$ & $61$  & $15$ & $2$ &0\\
\hline $7$ & $507$  & $91$ & $9$ &0\\
\hline $8$ & $7442$ & $809$ & $81$ &6\\
\hline
\end{tabular}
\vskip .1in
\caption{Numerical algebraic geometry results for SCT graphs  }
\vskip -.2in
\end{table}
\section{Graphs with exotic solutions}
\noindent Cycle graphs always admit exotic solutions, which is a reflection of a more general result. A {\em circulant graph} is a graph whose automorphism group acts transitively on the vertices. As a result, the adjacency matrix of the graph is a circulant matrix: each row is a cyclic shift by one position of the row above it. Wiley-Strogatz-Girvan show that circulant graphs admit exotic solutions \cite{wiley}. Such graphs have the highest connectivity known for graphs with exotic solutions, with $\mu(G) \sim .68$. In this section, we investigate exotic states for graphs with a small number of vertices using numerical algebraic geometry. All angles will be represented in radians.
%In what follows, we omit the complete graph when enumerating the %isomorphism classes of graphs on $n$-vertices; by Taylor's result it has %no exotic solutions. 
\subsection{Exotic solutions on at most 7 vertices}
Numerical algebraic geometry indicates that graphs on five or six vertices which have an exotic solution are either cycles, or a pentagon and triangle glued on a common edge:
\begin{figure}[h]
\vskip -1.9in
\includegraphics[width=3in]{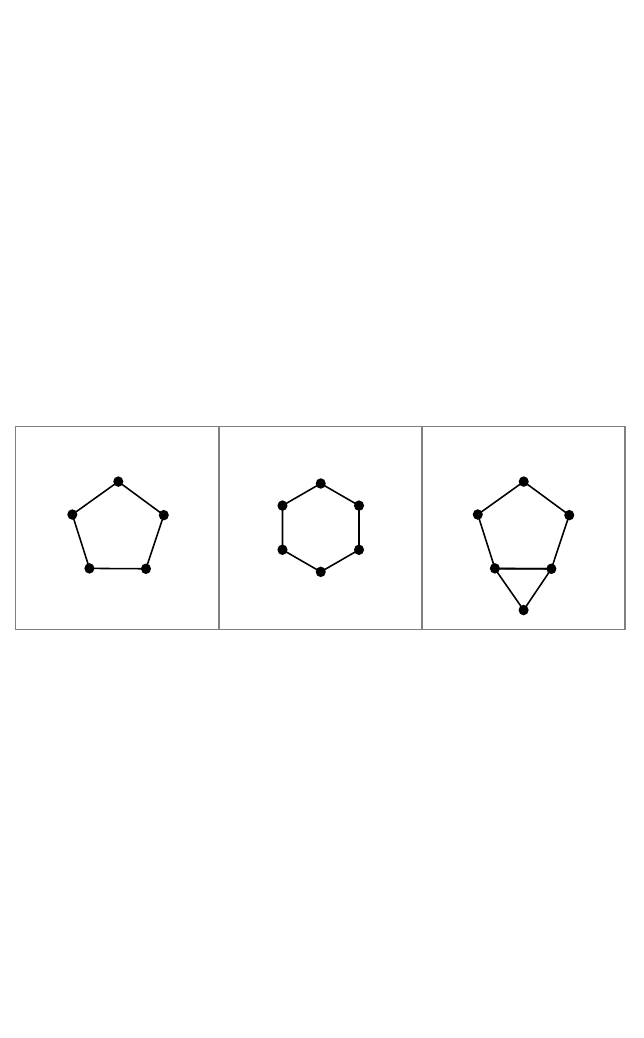}
\vskip -1.9in
\caption{Graphs with exotic solutions, $V=5$ or $6$}
\end{figure}
%\vskip -2in

\noindent  While there are only 11 isomorphism classes of graphs on five vertices, on six vertices, there are 61 isomorphism classes, and for seven vertices, there are 507 isomorphism classes. And indeed, on seven vertices, things become more interesting: numerical algebraic geometry identifies 9 such graphs, shown in Figure~\ref{fig:3}.
\begin{figure}[h]
\vskip -.1in
\includegraphics[width=4.5in]{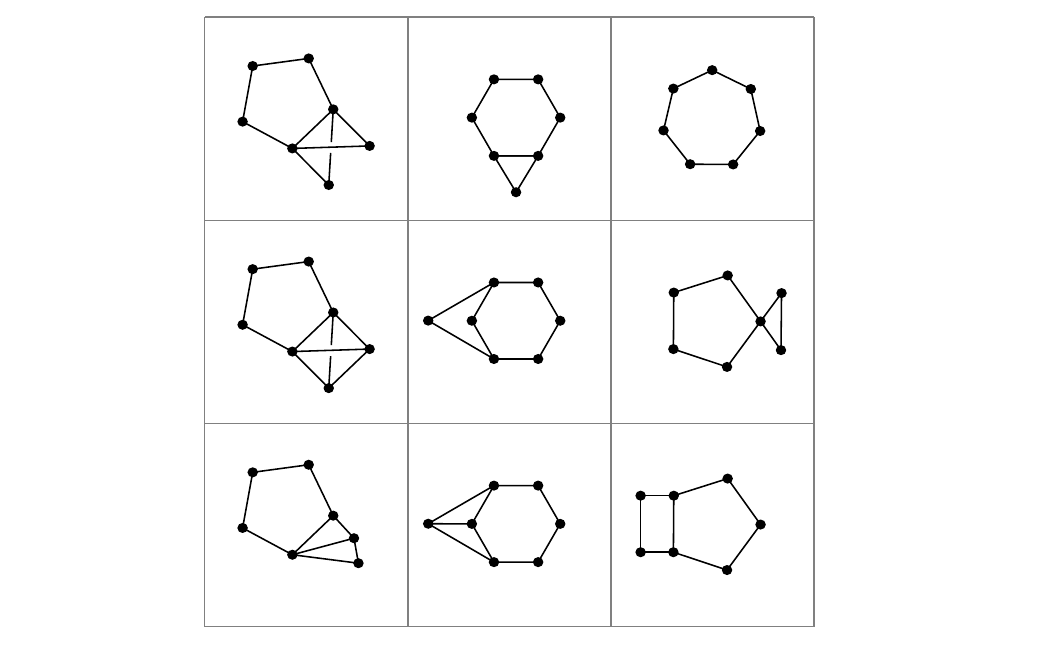}
%\vskip -2in
\caption{Graphs with exotic solutions, $V=7$}
\label{fig:3}
\end{figure}
%\vskip -2in

\noindent What is noteworthy about this is that every one of these graphs has a chordless $n$-cycle $C_n$, with $n \ge 5$. And in fact, this is also the case for {\em all but one} of the graphs on eight vertices having exotic solutions.

\subsection{Exotic solutions, 8 vertices}
For eight vertices, there are 7442 isomorphism classes of SCT (simple, connected, all vertices of degree $\ge 2$) graphs, and numerical algebraic geometry identifies 81 which have exotic solutions.
\begin{exm}\label{Special8}
All of the graphs on 8 vertices having exotic solutions have a chordless $n \ge 5$ cycle, with one exception (see Figure~\ref{fig:4}).
\begin{figure}[h]
\vskip -.2in
\includegraphics[width=6.0in]{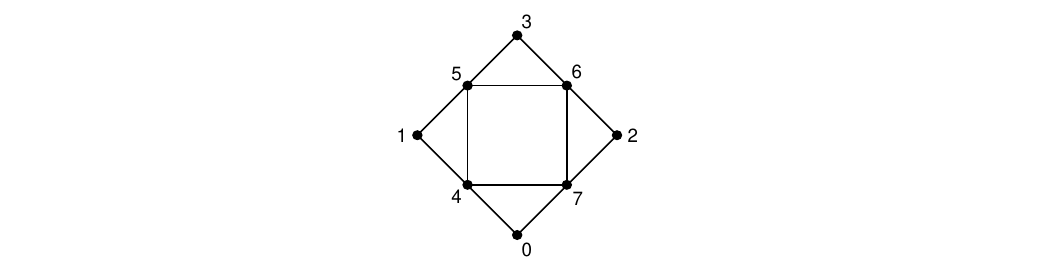}
%\vskip -2in
\caption{The exceptional 8-vertex graph with exotic solutions}
\label{fig:4}
\end{figure}

%\vskip -2in
\noindent The graph in Figure~\ref{fig:4} admits two exotic solutions, which are both twisted stable states. Recall that $x_i = \cos(\theta_i)$ and $y_i = \sin(\theta_i)$; rounded to 2 decimals, the solutions are below. 

\noindent Note that the top row is the standard stable synchronized solution.
\[
\left[ \!\begin{array}{cccccccccccccccc}
x_0&x_1&x_2&x_3&x_4&x_5&x_6&x_7&y_0&y_1&y_2&y_3&y_4&y_5&y_6&y_7\\
1&1&1&1&1&1&1&1&0&0&0&0&0&0&0&0\\
1&0&0&-1&.71&-.71&-.71&.71&0&-1&1 & 0&-.71&-.71&.71&.71\\
1&0&0&-1&.71&-.71&-.71&.71&0&1 &-1&0&.71 &.71 &-.71&-.71
\end{array}\! \right]
\]
The corresponding angles $\theta$ (note labelling above) are
\[
\left[ \!\begin{array}{cccccccc}
0&0&0&0&0&0&0&0\\
0&\frac{3\pi}{2}&\frac{\pi}{2}&\pi&\frac{7\pi}{4}&\frac{5\pi}{4}&\frac{3\pi}{4}&\frac{\pi}{4}\\
0&\frac{\pi}{2}&\frac{3\pi}{2}&\pi&\frac{\pi}{4}&\frac{3\pi}{4}&\frac{5\pi}{4}&\frac{7\pi}{4}
\end{array}\! \right]
\]
We discuss these computations more in Example~\ref{comp1}. For the ideal $I_G$,
\[
I_G = J \bigcap I_\Sigma, 
\]
where $J$ is a prime ideal generated by seven quadrics, which are exactly the quadrics defining $I_G$, and seven sextics. These sextics are all quite complicated;  even after reducing them modulo the quadrics in $I_G$, the smallest has 170 terms and the largest 373 terms. 

To prove this is the irreducible decomposition, we first compute the ideal quotient $I_G : I_\Sigma$, which yields an ideal $J$.  Using {\it Macaulay2} we verify that $J$ is prime, and that $J \cap I_\Sigma = I_G$.
%we find a rational parameterization. The existence of a parametrization was verified using {\it Macaulay2}.  The actual parametrization is quite complicated.
%{\tt TODO: rerun using M2, and write down some sizes of polynomials we find}

\end{exm} 
\subsection{Constructing Graphs with Exotic Solutions}
We begin with an observation of  Ling-Xu-Bandeira in \S 2.1 of \cite{ling}. 
\begin{lem}\label{glue2}
If we allow edges to have negative weights, then for the homogeneous Kuramoto system of a graph $G$,  the Jacobian matrix appearing in Equation~\ref{Jacobian} is the weighted graph Laplacian with weight matrix 
\[
a_{ij}(\cos(\theta_i-\theta_j))_{1 \le i \le j \le n},
\]
\vskip .1in
\noindent where the $a_{ij}$ are coefficients of the adjacency matrix of $G$. In particular, if $p=(\theta_0,\ldots,\theta_{n-1})$ is an equilibrium point such that $\cos(\theta_i-\theta_j)> 0 $ for all edges, then the point $p$ is linearly stable. 
\end{lem}
\begin{proof}
The standard proof that the graph Laplacian of a connected graph has one zero eigenvalue and the remaining eigenvalues are greater than zero uses the fact that the Laplacian factors as
\[
L_G = B_G \cdot B_G^T,
\]
\vskip .1in
\noindent where $B_G$ is an oriented edge-vertex adjacency matrix (the orientation chosen is immaterial). Therefore choosing $\widetilde{B_G}$ to be weighted with weight of the edge $e_{ij} = \sqrt{\cos(\theta_i -\theta_j)}$ provides the necessary adjustment to take the weighting into account. As a consequence, as long as the weightings $\cos(\theta_i-\theta_j)$ are all positive, the resulting Laplacian satisfies the condition for linear stability: if 
\[
\theta_i - \theta_j \in \Big(-\frac{\pi}{2}, \frac{\pi}{2}\Big). 
\]
then the system is linearly stable at $p$.
\end{proof}
\begin{rmk} Computations using numerical algebraic geometry identified a pair of graphs with exotic solutions (hence, solutions which are linearly stable), but where not all the angles satisfy $\theta_i - \theta_j \in (-\frac{\pi}{2}, \frac{\pi}{2})$. We analyze these graphs in Example~\ref{comp4}.
\end{rmk}

\noindent In constructing and analyzing examples of exotic solutions, the following lemma will be useful.
\begin{lem}\label{2valent} Let $v$ be a vertex of degree two, with $b$ denoting the angle at $v$, and $a$, $c$ the angles at the two vertices adjacent to $v$. A solution to the homogeneous Kuramoto system must satisfy
\[
b = \frac{a+c}{2}+k\pi \mbox{ or } c = a + (2k+1)\pi \mbox{ for some } k \in \Z,
\]
and if $c = a + (2k+1)\pi$ then the solution cannot be linearly stable.
\end{lem}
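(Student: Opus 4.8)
The plan is to separate the statement into its two assertions: the trigonometric constraint, which follows directly from the equilibrium equation at $v$, and the instability of the second branch, which I would establish through the weighted-Laplacian description of the Jacobian supplied by Lemma~\ref{glue2}. For the constraint, the equilibrium condition of Equation~\ref{homogKsols} at the degree-two vertex $v$ reads $\sin(a-b)+\sin(c-b)=0$. I would apply the sum-to-product identity to rewrite the left side as $2\sin\!\big(\tfrac{a+c}{2}-b\big)\cos\!\big(\tfrac{a-c}{2}\big)$. A product vanishes iff one factor does, so either $\tfrac{a+c}{2}-b\in\pi\Z$, giving $b=\tfrac{a+c}{2}+k\pi$, or $\tfrac{a-c}{2}\in\tfrac{\pi}{2}+\pi\Z$, giving $a-c\in\pi+2\pi\Z$, i.e. $c=a+(2k+1)\pi$ for some $k\in\Z$. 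This is the claimed dichotomy and is entirely routine.

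The substantive claim is that the branch $c=a+(2k+1)\pi$ never produces a linearly stable state. By Lemma~\ref{glue2}, the Jacobian $J(p)$ is the negative of the weighted graph Laplacian whose associated quadratic form is $R(z)=\sum_{ij\in E}\cos(\theta_i-\theta_j)(z_i-z_j)^2$; linear stability forces $J(p)$ to be negative semidefinite with a one-dimensional kernel, equivalently $R\ge 0$ with zero locus of dimension exactly one. The key computation is the weights on the two edges meeting $v$: they are $\cos(b-a)$ and $\cos(b-c)$, and since $c=a+(2k+1)\pi$ is an odd multiple of $\pi$ away from $a$, we get $\cos(b-c)=\cos\!\big(b-a-(2k+1)\pi\big)=-\cos(b-a)$. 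So the two incident weights are $w$ and $-w$, where $w=\cos(b-a)$.

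Because $v$ has degree two, the only terms of $R(z)$ containing the coordinate $z_v$ come from these two edges, and their sum $w(z_v-z_a)^2-w(z_v-z_c)^2=w(z_c-z_a)(2z_v-z_a-z_c)$ is \emph{affine} in $z_v$: the quadratic terms cancel precisely because the weights are opposite. If $w\neq 0$, I would fix $z_a\neq z_c$ (legitimate, since the two neighbors of $v$ are distinct vertices and thus independent coordinates) and all remaining coordinates arbitrarily; then $R$ is a non-constant affine function of $z_v$, hence unbounded below, and choosing $z_v$ suitably yields $R(z)<0$. This makes $J(p)$ have a positive eigenvalue, so $p$ is not stable. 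If instead $w=0$, both incident weights vanish, so the $v$-th row and column of the Laplacian are identically zero; then $e_v$ and the all-ones vector are two independent kernel elements, forcing a second zero eigenvalue of $J(p)$ and again precluding linear stability.

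The only delicate point is the sign bookkeeping that relates $J(p)$ to the (positive) Laplacian, so that ``$R$ attains a negative value'' translates correctly into ``$J(p)$ has a positive eigenvalue''; I would pin this down by computing the entries $\partial f_i/\partial\theta_j$ directly, which gives $J(p)=-L$ with $L$ the Laplacian carrying the form $R$, rather than relying on the exact wording of Lemma~\ref{glue2}. The genuinely load-bearing observation—that a degree-two vertex with opposite incident weights contributes an affine, hence unbounded-below, term to $R$—is short once isolated, so there is no serious analytic obstacle; one need only remember to separate out the degenerate case $w=0$ as above.
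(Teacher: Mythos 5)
Your proposal is correct, and the instability half proceeds by a genuinely different route from the paper. For the trigonometric dichotomy you use the sum-to-product identity $\sin(a-b)+\sin(c-b)=2\sin\bigl(\tfrac{a+c}{2}-b\bigr)\cos\bigl(\tfrac{a-c}{2}\bigr)$, whereas the paper splits $\sin(c-b)=-\sin(a-b)$ into its two solution branches; these are equivalent and equally routine. For the instability claim, the paper works with the $3\times 3$ principal submatrix of the Jacobian indexed by $b,a,c$, invokes the principal-minor criterion for negative semidefiniteness, and must (i) treat separately the cases where $a$ and $c$ are or are not adjacent, and (ii) dispose of two degenerate subcases ($\cos(a)=0$, and $\sigma_a=\sigma_c=0$, the latter via a factorization $\widetilde{B_G}\widetilde{B_G}^T$ of the weighted Laplacian). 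Your argument instead evaluates the quadratic form $R(z)=\sum_{ij\in E}\cos(\theta_i-\theta_j)(z_i-z_j)^2$ and observes that because $v$ has degree two with opposite incident weights $w$ and $-w$, the $z_v$-dependence of $R$ is affine, $w(z_c-z_a)(2z_v-z_a-z_c)$, hence unbounded below when $w\neq 0$ and $z_a\neq z_c$; the case $w=0$ gives a zero row and column, hence $e_v$ and $\mathbf{1}$ as independent kernel vectors. This buys you a uniform treatment independent of whether $a$ and $c$ are adjacent, avoids the principal-minor bookkeeping and the nonnegativity of the diagonal entries $\sigma_a,\sigma_c$, and replaces the paper's square-root factorization (which is delicate when weights are negative) with a direct kernel count. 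Your one flagged concern, the sign convention relating $J(p)$ to the Laplacian, is handled exactly as you suggest and matches the paper's own identification $J=-L$.
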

\begin{proof}
At the vertex $v$, the condition of Equation~\ref{homogK} is simply
\[
\sin(c-b) +\sin(a-b) = 0,
\]
so either 
\[c-b = b-a +2k\pi
\]
 and the first possibility holds, or 
 \[(2k+1)\pi - (c-b) = b-a,
 \]and the second possibility holds. To see that $c=a+(2k+1)\pi$ does not result in a linearly stable system, we compute the Jacobian matrix of the system, ordering the vertices of $G$ starting with $b,a,c$. Let $\sigma_v$ denote the off diagonal row sum of the row corresponding to vertex $v$, and choose coordinates so $b=0$. Using that $\cos(a+(2k+1)\pi) = -\cos(a)$, we see that if vertices $a$ and $c$ are not connected, the top-left $3 \times 3$ submatrix of the Jacobian of the system is
\begin{equation}\label{PSD}
\left[ \!\begin{array}{ccc}
-\cos(c) -\cos(a)&\cos(a)&\cos(c)\\
\cos(a) &-\sigma_a & 0 \\
\cos(c) & 0 & -\sigma_c
\end{array}\! \right]
= 
\left[ \!\begin{array}{ccc}
0&\cos(a)&-\cos(a)\\
\cos(a) &-\sigma_a & 0 \\
-\cos(a) & 0 & -\sigma_c
\end{array}\! \right].
\end{equation}

Recall that there is a variant of Sylvester's theorem to check if a matrix is negative semidefinite (e.g.\S 6.3 of \cite{strang}): a symmetric matrix $M$ is negative semidefinite if and only if all odd principal minors are non-positive and all even principal minors are non-negative. The principal minors of the $3 \times 3$ submatrix above are 
\[
\{0, -\sigma_a, -\sigma_c, -\cos^2(a), \sigma_a\cdot\sigma_c, \cos^2(a)\cdot(\sigma_a+\sigma_c)\}
\]
By Lemma~\ref{glue2}, the Jacobian is a weighted negative Laplacian, and in particular $\sigma_a$ and $\sigma_c$ are non-negative.
 So for the $ 3 \times 3$ principal minor to be non-positive, we must have  
 \[
 \cos^2(a)\cdot(\sigma_a+\sigma_c) = 0 \Rightarrow \cos^2(a) = 0 \mbox{ or }\sigma_a=0=\sigma_c.
 \]
 
 If $\cos(a)=0$ then the matrix has a zero row yielding a zero eigenvalue. It is impossible for all remaining eigenvalues to be negative, because the submatrix resulting from deleting the first row and column still has all row sums equal to zero, so is itself singular. In particular, $\cos(a)=0$ results in a Jacobian matrix with at least two zero eigenvalues. 
 
 Next we consider the situation where $\sigma_a=0=\sigma_c$. The weighted Laplacian factors as $\widetilde{B_G}\cdot \widetilde{B_G}^T$ with $\widetilde{B_G}$ as in Lemma~\ref{glue2}, so a diagonal entry can be zero only if the corresponding row of $\widetilde{B_G}$ is the zero row. But since $\sigma_a = 0 = \sigma_c$, this would therefore imply that $\widetilde{B_G}$ has two zero rows. Since 
 \[
 \rank(A \cdot B) \le \min\{\rank(A),\rank(B)\},
 \]
 the kernel of the Jacobian has dimension at least two, so there are at least two zero eigenvalues. This settles the case when vertices $a$ and $c$ do not share an edge. 

To conclude, suppose $G$ has an edge $\overline{ac}$. In this case, the $(2,3)$ and $(3,2)$ entries of the matrix in Equation~\ref{PSD} are $-1$, and the $3 \times 3$ principal minor  is $\cos^2(a)\cdot(\sigma_a+\sigma_c+2)$, which is nonpositive only if $\cos(a)=0$. This case has been ruled out by the reasoning above.  
\end{proof}

\noindent Notice that in Figure 2, a pentagon and a triangle sharing a single common edge have an exotic solution, and in Figure 3, a pentagon and a $K_4$ sharing a common edge have an exotic solution. 

\begin{exm}\label{K5pent}
On eight vertices, a pentagon and a $K_5$ sharing a common edge as below have an exotic solution. This hints at a more general result, which will appear as Theorem~\ref{KnC5exotic}.
\begin{figure}[h]
%\vskip -1.9in
\includegraphics[width=5.0in]{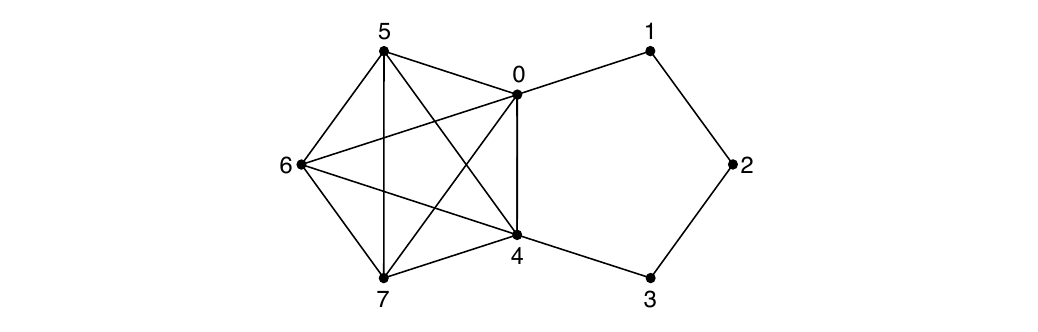}
%\vskip -2in
\caption{An 8-vertex graph with exotic solutions}
\end{figure}
%\vskip -2in

\vskip .1in
\noindent First, let $\theta_i = 0$ for $i \in \{5,6,7\}$. By Lemma~\ref{2valent}, the $\theta_i$ for $i \in \{1,2,3\}$ yield equations 
\[
\begin{array}{c}
 \theta_1 = (\theta_0+\theta_2)/2.  \\ 
\theta_2 = (\theta_1+\theta_3)/2.\\
 \theta_3 = (\theta_2+\theta_4)/2. \\
\end{array}
\]
\vskip .1in
\noindent To simplify notation, let $\theta_4 = \alpha$ and $\theta_3 = \beta$. We claim that setting $\theta_2=\pi$ and $\theta_0 = -\alpha$ yields an exotic state. To see this, first note that with these values
\[
\alpha = 2\beta-\pi.
\]
We need to show that the corresponding Jacobian matrix has all but one eigenvalue negative (recall that there is always at least one zero eigenvalue). Consider the remaining equations. At vertex $4$, we have 
\[
\begin{array}{ccc}
0& =& \sin(-\alpha-\alpha) +\sin(0-\alpha)+\sin(0-\alpha)+\sin(0-\alpha)+\sin(\beta-\alpha) \\
& = &-\sin(2 \alpha) -3\sin(\alpha) +\sin(\beta-\alpha)\\
&= & \sin(\beta)+3\sin(2\beta)-\sin(4\beta)\\
& = & \sin(\beta)(1 + 6\cos(\beta) -4\cos(\beta)(\cos^2(\beta)-\sin^2(\beta)))\\
& = & \sin(\beta)(1+10\cos(\beta)-8\cos^3(\beta))
\end{array}
\]
Since we need the last quantity to be zero, we either have $\beta \in \{ 0 , \pi \}$, which turns out to be impossible, or a solution to 
\[
1+10\cos(\beta)-8\cos^3(\beta)= 0.
\]
Letting $z=2\cos(\beta)$, we seek a root of 
\[
p(z) = 1+5z-z^3 \mbox{ with }z \in[-2..2]. 
\]
By Sturm's theorem $($see \S 2.2.2 of \cite{BPR}$)$, the number of roots in $(a,b]$ is 
\[
V(a)-V(b), \mbox{ where } V(t) = \sharp \mbox{ of sign changes in } [(p_0(t), p_1(t), p_2(t), p_3(t)],
\]
and $p_0=p(z), p_1=p'(z)$, and $p_i$ is the negative remainder on dividing $p_i$ by $p_{i-1}$. 

We find there is a unique value $(\sim -.2)$ of $z=2\cos(\beta)$ in $[-2,2]$. 
\begin{comment}
 Working out the remaining vertex constraints, we obtain the matrix below, where the diagonal entries $\sigma_i$ are the (negative) sums of the other entries in the row. 
\[
\left[ \!
\begin{array}{cccccccc}
\sigma_1 &1&1& 1-z^2/2& 1-z^2/2& 0 & 0& 0 \\
1 &\sigma_2&1& 1-z^2/2& 1-z^2/2& 0 & 0& 0  \\
1 &1&\sigma_3& 1-z^2/2& 1-z^2/2& 0 & 0& 0  \\
1-z^2/2 &1-z^2/2&1-z^2/2& \sigma_4& 1-2z^2+z^4/3& -z/2 & 0& 0  \\
1-z^2/2 &1-z^2/2&1-z^2/2& 1-2z^2+z^4/3& \sigma_5& 0 & -z/2& 0  \\
0 &0&0& -z/2&0& \sigma_6 & 0& -z/2  \\
0 &0&0& 0& -z/2& 0 & \sigma_7& -z/2  \\
0 &0&0& 0 & 0& -z/2 & -z/2& \sigma_8  
\end{array}\! \right],
\]
\end{comment}
A computation shows that at the equilibrium point above, the Jacobian matrix is negative semidefinite, with a single zero eigenvalue. One of the exotic solutions arises as below, with $\alpha \sim 11.5728^{\circ} =  .064\pi$ radians.
\begin{equation}\label{eq5cycle}\begin{array}{ccc}
\theta_0 & = & -\alpha \\
\theta_1 & = & \frac{\pi - \alpha}{2} \\
\theta_2 & = & \pi\\
\theta_3 & = & \frac{\pi + \alpha}{2} \\
\theta_4 & = & \alpha \\
\theta_i & = & 0 \mbox{ for } i \in \{5,6,7\} 
\end{array}
\end{equation}
\end{exm}
\noindent The previous example generalizes, but we need a preparatory lemma.
\begin{lem}\label{glue1}
Let $G$ be a simple, connected graph on $d$-vertices and $C_5$ a chordless 5-cycle. Consider the graph $G'$ obtained by connecting the vertices of an edge $E$ of $C_5$ to every vertex of $G$. Then there is a unique value of $\alpha > 0$ such that assigning $\theta_i =0$ at all vertices of $G$ and $\{\theta_0,\ldots, \theta_4\}$ as in Equation~\ref{eq5cycle} yields an equilibrium solution to Equation~\ref{homogK} for $G'$.
\end{lem}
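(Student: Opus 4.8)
The plan is to show that, once the angles are assigned as prescribed, the equilibrium conditions of Equation~\ref{homogK} for $G'$ split into three families, two of which hold identically in $\alpha$, so that existence and uniqueness collapse to a single scalar equation. Label the five cycle vertices $v_0,\dots,v_4$ so that $E=\{v_0,v_4\}$ is the glued edge and $v_1,v_2,v_3$ are the three degree-two vertices of $C_5$. The three families are: (i) the equations at the vertices of $G$; (ii) the equations at $v_1,v_2,v_3$; and (iii) the equations at the two ``hub'' vertices $v_0$ and $v_4$. First I would verify that families (i) and (ii) are satisfied for every value of $\alpha$, reducing the whole system to family (iii).

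For family (i): a vertex $w$ of $G$ is adjacent to its neighbours inside $G$ (all carrying angle $0$, hence contributing $\sin(0)=0$) together with both hubs $v_0,v_4$. Since $\theta_{v_0}=-\alpha$ and $\theta_{v_4}=\alpha$, the equation at $w$ is $\sin(-\alpha)+\sin(\alpha)=0$, which holds for all $\alpha$ and, crucially, does not see the internal structure of $G$; only the number $d=|V(G)|$ will enter later. For family (ii): each of $v_1,v_2,v_3$ has degree two, and the assignment of Equation~\ref{eq5cycle} is antisymmetric about $v_2$ (with $\theta_{v_2}\in\{0,\pi\}$ and $\theta_{v_{4-i}}=-\theta_{v_i}$ on the exotic branch $\theta_{v_2}=\pi$). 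A direct application of Lemma~\ref{2valent}, or a one-line product-to-sum computation, shows the two sine terms at each of $v_1,v_2,v_3$ cancel identically in $\alpha$.

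It remains to analyse family (iii). By the antisymmetry, the equations at $v_0$ and $v_4$ coincide, so the entire system collapses to the single equation at $v_0$, namely
\[
\cos(\alpha/2)=\sin(2\alpha)+d\,\sin(\alpha),
\]
which, exactly as in Example~\ref{K5pent}, depends only on $d$. Substituting $\beta=(\pi+\alpha)/2$ and $z=2\cos\beta$ and removing the extraneous factor $\sin\beta$ (whose vanishing forces $\alpha\in\{-\pi,\pi\}$, neither giving an exotic state), this becomes the cubic
\[
p_d(z)=1+(d+2)z-z^{3}=0 .
\]
The condition $\alpha>0$ with $\alpha\in(0,\pi)$ is equivalent to $z=2\cos\beta\in(-2,0)$, so the lemma amounts to the claim that $p_d$ has a unique root in $(-2,0)$.

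Finally I would establish that unique root, and I expect this to be the \textbf{main obstacle}: the real-root count of the cubic in the correct interval, together with the check that the relevant root produces a positive $\alpha$. Existence is immediate from $p_d(0)=1>0$ and $p_d(-2)=5-2d<0$ for $d\ge 3$. For uniqueness I would examine $p_d'(z)=(d+2)-3z^{2}$: on $(-2,0)$ its only possible zero is the local minimum $z^\ast=-\sqrt{(d+2)/3}$, so $p_d$ is first decreasing and then increasing, whence it crosses zero at most once on $[z^\ast,0]$ and stays negative on $[-2,z^\ast]$; checking in addition that $p_d>0$ on $[0,2]$ shows this is the only root of $p_d$ in $[-2,2]$, in agreement with the Sturm computation carried out for $d=3$ in Example~\ref{K5pent}. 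The chosen root then yields a unique $\beta\in(\pi/2,\pi)$, and hence a unique $\alpha=2\beta-\pi>0$. The Sturm-sequence bookkeeping is precisely what controls the root count, and the sign of $p_d(-2)$ is what restricts the uniqueness argument to $d\ge 3$.
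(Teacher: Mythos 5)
Your proof is correct and follows the same skeleton as the paper's: everything collapses to the balance equation at a hub vertex, which after factoring out $\cos(\alpha/2)$ becomes the depressed cubic $1+(d+2)z-z^3$ in $z=2\cos\beta=-2\sin(\alpha/2)$ (the paper writes the same cubic in $y=-z$), and the lemma reduces to a root count on an interval. Two places where you genuinely diverge, both to your credit. First, you verify explicitly that the equations at the vertices of $G$ and at the three degree-two cycle vertices hold identically in $\alpha$; the paper dispatches this with ``the pattern generalizes.'' Your observation that the antisymmetry $\theta_{4-i}=-\theta_i$ about $v_2$ is what kills family (ii) is exactly right, and it exposes a typo in the paper: Equation~\ref{eq5cycle} as printed has $\theta_1=(\pi-\alpha)/2$, which is not the antisymmetric value $-(\pi+\alpha)/2\equiv(3\pi-\alpha)/2$ and fails the equation at $v_2$ (the sum there is $-2\cos(\alpha/2)\ne 0$); the assignment actually used in the paper's proof, Equation~\ref{AnglesExotic}, is the antisymmetric one, and your reading agrees with it and with the numerical solution in Example~\ref{comp2}. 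Second, for the root count the paper invokes Sturm's theorem, whereas you use an elementary sign-and-monotonicity analysis of $p_d$ and $p_d'$; the conclusions agree, including the fact that the uniqueness hinges on $p_d(-2)=5-2d<0$ and hence on $d\ge 3$ (the paper likewise concedes extra roots for $d\in\{1,2\}$, so the lemma as stated really does need that restriction, which you flag more honestly than the statement does). The only cosmetic caveat is that for $d\ge 10$ the critical point $-\sqrt{(d+2)/3}$ leaves $(-2,0)$ and $p_d$ is simply increasing there, but your argument degenerates gracefully to the even easier monotone case.
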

\begin{proof} The main point is that the pattern in Example~\ref{K5pent} generalizes. We choose a vertex labelling to simplify notation. Label the angles at the vertices of the $C_5$ graph as in Example~\ref{K5pent} 
\[
\{\theta_0, \ldots, \theta_4\}.
\]
Set the angles $\{\theta_5, \ldots\theta_{d+4}\}$ at the vertices of the graph $G$  to be zero, and define the remaining angles via
\begin{equation}\label{AnglesExotic}
\begin{array}{ccc}
\theta_i & = & \alpha + i \cdot \frac{(\pi - \alpha)}{2}, \mbox{ for }i \in \{0, \ldots 4\}.
\end{array}
\end{equation}
The parameter $\alpha$ is a function of $d$, obtained as in Example~\ref{K5pent}, but with a slight modification. Label the vertices of $E$ as $v_0$ and $v_4$. Then the calculation of $\theta_4$ changes very simply; we have
\[
\arraycolsep=1.3pt\def\arraystretch{1.5}
\begin{array}{rll}
0& =& \sin(-\alpha-\alpha) +d \cdot \sin(0-\alpha)+\sin(\frac{\pi+\alpha}{2}-\alpha) \\
& = &-\sin(2 \alpha) -d\sin(\alpha) +\sin(\frac{\pi-\alpha}{2})\\
& = &-2\sin(\alpha)\cos(\alpha) -d\sin(\alpha) +\cos(\frac{\alpha}{2})\\
& = &-2(2\sin(\frac{\alpha}{2})\cos(\frac{\alpha}{2}))\cos(\alpha)-d(2 \sin(\frac{\alpha}{2})\cos(\frac{\alpha}{2})) +\cos(\frac{\alpha}{2})\\
& = &\cos(\frac{\alpha}{2})\cdot (-4\sin(\frac{\alpha}{2})\cos(\alpha)-2d\sin(\frac{\alpha}{2}) +1)
\end{array}
\]
Since $\cos(\frac{\alpha}{2})=0$ does not yield a solution, using the identity  
\[
\cos(\alpha) = \cos^2\Big(\frac{\alpha}{2}\Big)-\sin^2\Big(\frac{\alpha}{2}\Big) = 1-2\sin^2\Big(\frac{\alpha}{2}\Big)
\]
and writing $w=\sin(\frac{\alpha}{2})$ yields
\[\arraycolsep=1.3pt\def\arraystretch{1.5}
\begin{array}{rll}
0& = &-4\sin(\frac{\alpha}{2})(1-2\sin^2(\frac{\alpha}{2}))-2d\sin(\frac{\alpha}{2}) +1\\
& = &-4w(1-2w^2)-2dw +1\\
& = &8w^3 -(4+2d)w +1\\
\end{array}
\]
Substituting $y=2w$ yields the depressed cubic $1-(d+2)y+y^3$, and 
\[
V(t) = \Big[t^3-(d+2)t+1, 3t^2-d-2, \frac{-2d-4}{3}t+1,d+2 -\frac{27}{4(d+2)^2}\Big],
\]
By Sturm's theorem, $1-(d+2)z+z^3$ has a single real root $\alpha \in[-2,2]$ when $d \ge 3$; when $d =1$ there are 3 roots in $[-2,2]$, and when $d=2$ there are 2 roots in $[-2,2]$. In both cases only one of the roots yields a linearly stable solution. 
\end{proof}

\begin{exm}
Gluing a triangle $(d=3)$ to $C_5$ yields Example~\ref{K5pent}. To two decimals the roots of $1-10w+8w^3$ are $\{1.06, .10, -1.16\}$ so \[\sin(\frac{\alpha}{2})\simeq.10 \Rightarrow \frac{\alpha}{2} \simeq .032\pi \mbox{ radians} \Rightarrow \alpha \simeq .064\pi \mbox{ radians},
\]agreeing with Example~\ref{K5pent}. The parameter $\alpha$ depends only on the number of vertices of $G$. 
\end{exm}
\noindent Lemmas \ref{glue2} and Lemma~\ref{glue1} lay the groundwork for producing families of graphs with exotic solutions which are not twisted stable states. The next result illustrates this technique.

\begin{thm}\label{KnC5exotic}
Let $G$ be a graph on $d$-vertices and $C_5$ a chordless 5-cycle. Consider the graph $G'$ obtained by choosing an edge $E$ of $C_5$, and connecting the vertices of $E$ to every vertex of $G$. Then $G'$ admits an exotic solution.
\end{thm}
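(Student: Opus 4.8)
The plan is to build on the two lemmas just proved: Lemma~\ref{glue1} supplies the equilibrium, and Lemma~\ref{glue2} certifies its stability via the positive-cosine criterion. Fix the vertex labelling of Lemma~\ref{glue1}, let $\alpha>0$ be the value it produces, and take the angles $\theta_0,\ldots,\theta_4$ on the $C_5$ as in Equation~\ref{AnglesExotic} together with $\theta_w=0$ for every $w\in G$. By Lemma~\ref{glue1} this is an equilibrium of the homogeneous Kuramoto system for $G'$, and it is visibly non-synchronized since $\theta_2=\pi$ while all vertices of $G$ carry angle $0$. So only linear stability remains to be shown.

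The heart of the argument is to verify that every edge of $G'$ has $\cos(\theta_i-\theta_j)>0$. I would sort the edges into three families. The four cycle edges $v_iv_{i+1}$ with $0\le i\le 3$ satisfy $\theta_i-\theta_{i+1}=-\tfrac{\pi-\alpha}{2}$, while the distinguished edge $E=v_0v_4$ gives $\theta_0-\theta_4\equiv 2\alpha\pmod{2\pi}$. The spokes come next: each $w\in G$ is joined to both endpoints of $E$, with $\theta_0-\theta_w=\alpha$ and $\theta_4-\theta_w\equiv-\alpha$. Finally, every internal edge of $G$ has $\theta_w-\theta_{w'}=0$. (This classification also re-confirms the equilibrium at each $w\in G$, since its two spokes contribute $\sin\alpha+\sin(-\alpha)=0$ and its internal edges contribute $0$.) Thus the only angle differences occurring are $0$, $\pm\alpha$, $-\tfrac{\pi-\alpha}{2}$, and $2\alpha$; the first three have positive cosine as soon as $0<\alpha<\pi/2$, and the last one has positive cosine precisely when $\alpha<\pi/4$.

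The one quantitative point that must be pinned down is the bound $\alpha<\pi/4$, forced by the edge $E$, whose difference is $2\alpha$. This is exactly where the root analysis inside Lemma~\ref{glue1} does the work: writing $w=\sin(\tfrac{\alpha}{2})$, the stable branch is the small positive root of $8w^3-(4+2d)w+1=0$, and I would bound it by $w<\sin(\pi/8)$, which is equivalent to $\alpha<\pi/4$. For $d\ge 3$ Sturm's theorem already isolates a single root in $[-2,2]$ and it is this small one; for $d\in\{1,2\}$, where several roots occur, I would select the small positive root that Lemma~\ref{glue1} flags as the stable one and confirm the same estimate directly. I expect this selection-and-bounding of the correct root to be the main obstacle, since it is the only place the size of $\alpha$, and hence the sign of $\cos 2\alpha$, is actually controlled; the cosine computations themselves are routine once $\alpha$ is known to be small.

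With all edge weights strictly positive and $G'$ connected (it is connected through the spokes to $v_0$ and $v_4$ regardless of whether $G$ is connected), Lemma~\ref{glue2} identifies the Jacobian at this point with the negative of a weighted graph Laplacian built from strictly positive weights. Such a Laplacian is positive semidefinite with one-dimensional kernel spanned by the all-ones vector, so the Jacobian has a single zero eigenvalue and all remaining eigenvalues strictly negative. Hence the equilibrium is linearly stable, and being non-synchronized it is an exotic solution, which is precisely the claim of Theorem~\ref{KnC5exotic}.
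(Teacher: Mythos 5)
Your proposal is correct and follows essentially the same route as the paper's proof: Lemma~\ref{glue1} supplies the equilibrium, Lemma~\ref{glue2} reduces linear stability to positivity of the edge weights $\cos(\theta_i-\theta_j)$, and a root estimate for the cubic (via Sturm's theorem) controls the size of $\alpha$. You are in fact more explicit than the paper, which only asserts that ``$\alpha$ is a small positive number'': you correctly isolate the binding constraint $\alpha<\pi/4$ coming from the edge $E$ (angle difference $2\alpha$) and the equivalent bound $\sin(\alpha/2)<\sin(\pi/8)$ on the relevant root of $8w^3-(4+2d)w+1$.
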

\begin{proof}
This follows from Lemma~\ref{glue2}, Lemma~\ref{glue1}, and an application of Sturm's theorem. By Lemma~\ref{glue1} we have an equilibrium point. By Lemma~\ref{glue2}, if the weights 
\[
\cos(\theta_i -\theta_j)
\] 
are positive, then the equilibrium point is linearly stable. For the angles $\theta_i$ in Equation~\ref{eq5cycle}, this requires knowing the value of $\alpha$ appearing in Lemma~\ref{glue1}, which follows by applying Sturm's theorem to the interval $[0,2]$: $\alpha$ is a small positive number which is decreasing as $d$ increases. Applying this to the angles in Equation~\ref{AnglesExotic} shows they all satisfy  $\cos(\theta_i -\theta_j) >0$.
\end{proof}

\begin{rmk} The above construction can be carried out more generally, by gluing an arbitrary graph $G$ on $d$-vertices to an $n\ge 5$-cycle $C_n$. Label the angles at the vertices of the $C_n$ graph as 
\[
\{\theta_0, \ldots, \theta_{n-1}\},
\]
and the angles at the vertices of $G$ as 
\[
\{\theta_n, \ldots, \theta_{n+d-1}\}.
\]
Glue $G$ and $C_n$ along the edge connecting vertices $0$ and $n-1$, set the angles $\theta_n, \ldots, \theta_{n+d-1}$
to be zero, and define the remaining angles via
\[\begin{array}{ccc}
\theta_i & = & \alpha + i \cdot \frac{2(\pi - \alpha)}{n-1}, \mbox{ for }i \in \{0, \ldots n-1\}.
\end{array}
\]
The parameter $\alpha$ is a function of both $d$ and $n$, and in the local computation at vertex $0$ where gluing occurs, we obtain relations which involve $\sin(n \cdot \alpha)$ and $\sin(\alpha)$, leading to an expression in terms of Chebyshev polynomials. We leave this for the interested reader. 
\end{rmk}
%Note that Figure~3 shows that gluing $C_5$ and a subgraph of $K_n$ %along a common edge does not always yield an exotic solution: it %depends on the choice of the subgraph of $K_n$. 
\vskip .1in
 In the next section, we give more examples of exotic solutions. As S. Strogatz pointed out to us, an example of a stable exotic solution on a dodecahedron appears in \cite{Udeigwe}. One interesting question is the interplay between those graphs which have a positive dimensional solution set, and those graphs with exotic solutions, tabulated at the end of \S 2. Our computations indicate that for SCT graphs with 7 vertices, none of the graphs with exotic solutions are among the graphs with positive dimensional solutions. 

This is not the case for graphs with 8 vertices, where there is an overlap shown in Table 1.
Six of the graphs with exotic solutions also have a positive dimensional component. One of those six is Example ~\ref{K5pent}, and for five of the six, the positive dimensional component can be identified using Theorem~\ref{PDsolns}.
%{\tt TODO: check how many $n=7$ graphs have positive dimensional solutions sets %with trig, and how many of these have exotics}
\pagebreak

\section{Computational Methods: The {\tt M2} package {\tt Oscillator.m2}}
\begin{exm}\label{comp1} In Example~\ref{Special8} we saw a non-cycle having twisted stable states:
\begin{verbatim}
i1 : needsPackage "Oscillators";

i2 : needsPackage "NautyGraphs";

i3 : G= graph{
         {0,4},{1,4},{1,5},
         {3,5},{2,6},{3,6},
         {0,7},{2,7},{4,5},
         {5,6},{6,7},{4,7}}
         
--input the graph G, as a list of edges, this is the format for
--the NautyGraphs package

o3 = Graph{0 => {4, 7}      }
           1 => {4, 5}
           2 => {6, 7}
           3 => {5, 6}
           4 => {0, 1, 5, 7}
           5 => {4, 1, 3, 6}
           6 => {5, 3, 2, 7}
           7 => {0, 4, 2, 6}
           
i4 : getExoticSolutions G

-- 115.674 seconds elapsed
-- fourd extra exotic solutions --

--coordinates of points (x_1..x_n-1, y_1..y_n-1); (x_0,y_0)=(1,0) is omitted.
+-+-+--+----+-----+-----+----+--+--+-+-----+-----+-----+-----+
|1|1|1 |1   |1    |1    |1   |0 |0 |0|0    |0    |0    |0    |
+-+-+--+----+-----+-----+----+--+--+-+-----+-----+-----+-----+
|0|0|-1|.707|-.707|-.707|.707|-1|1 |0|-.707|-.707|.707 |.707 |
+-+-+--+----+-----+-----+----+--+--+-+-----+-----+-----+-----+
|0|0|-1|.707|-.707|-.707|.707|1 |-1|0|.707 |.707 |-.707|-.707|
+-+-+--+----+-----+-----+----+--+--+-+-----+-----+-----+-----+

--angles (in degrees, first angle is always zero, and is omitted)
+---+---+---+---+---+---+---+
|0  |0  |0  |0  |0  |0  |0  |
+---+---+---+---+---+---+---+
|270|90 |180|315|225|135|45 |
+---+---+---+---+---+---+---+
|90 |270|180|45 |135|225|315|
+---+---+---+---+---+---+---+

\end{verbatim}
\end{exm}

\pagebreak
\begin{exm}\label{comp2} Next we compute the solutions for Example~\ref{K5pent}, and for a five cycle:
\begin{verbatim}
i14 :  K5C5= graph{{0,1},{0,2},{0,3},{0,4},{1,2},{1,3},{1,4},
                   {2,3},{2,4},{3,4},{0,5},{5,6},{6,7},{1,7}}

o14 = Graph{0 => {1, 2, 3, 4, 5}}
            1 => {0, 2, 3, 4, 7}
            2 => {0, 1, 3, 4}
            3 => {0, 1, 2, 4}
            4 => {0, 1, 2, 3}
            5 => {0, 6}
            6 => {5, 7}
            7 => {1, 6}
i15 : getExoticSolutions K5C5
-- 99.7506 seconds elapsed
-- found extra exotic solutions --
--coordinates of points (x_1..x_n-1, y_1..y_n-1); (x_0,y_0)=(1,0) is omitted.

+----+---+---+---+----+----+-----+-----+-----+-----+-----+-----+-----+-----+
|1   |1  |1  |1  |1   |1   |1    |0    |0    |0    |0    |0    |0    |0    |
+----+---+---+---+----+----+-----+-----+-----+-----+-----+-----+-----+-----+
|.919|.98|.98|.98|.101|-.98|-.298|.393 |.201 |.201 |.201 |-.995|-.201|.954 |
+----+---+---+---+----+----+-----+-----+-----+-----+-----+-----+-----+-----+
|.919|.98|.98|.98|.101|-.98|-.298|-.393|-.201|-.201|-.201|.995 |.201 |-.954|
+----+---+---+---+----+----+-----+-----+-----+-----+-----+-----+-----+-----+

--angles (in degrees, first angle is always zero, and is omitted)
+-------+-------+-------+-------+-------+-------+-------+
|0      |0      |0      |0      |0      |0      |0      |
+-------+-------+-------+-------+-------+-------+-------+
|23.1455|11.5728|11.5728|11.5728|275.786|191.573|107.359|
+-------+-------+-------+-------+-------+-------+-------+
|336.854|348.427|348.427|348.427|84.2136|168.427|252.641|
+-------+-------+-------+-------+-------+-------+-------+

For the five cycle we suppress the x_i,y_i coordinates, printing only angles
i36 : getExoticSolutions Pent
---- doing graph Graph{0 => {1, 4}} with count 4 --------------
                       1 => {0, 2}
                       2 => {1, 3}
                       3 => {2, 4}
                       4 => {0, 3}
+---+---+---+---+
|72 |144|216|288|
+---+---+---+---+
|288|216|144|72 |
+---+---+---+---+
|0  |0  |0  |0  |
+---+---+---+---+
\end{verbatim}
        \end{exm}
        \pagebreak
        \begin{exm}\label{comp3} For graphs with 7 or fewer vertices, there are at most 2 nonstandard exotic solutions. This is no longer the case for graphs with 8 or more vertices. Below we compute solutions for two pentagons sharing an edge (an example with eight vertices) and a pentagon and hexagon sharing an edge (which has nine vertices). 
        
        The computation shows that there are, respectively, four and six nonstandard solutions. Recall the first angle is 0, and is not printed. 
        \begin{verbatim}
i10: PentPent = graph{{0,1},{1,2},{2,3},{3,4},{4,0},{0,5},{5,6},{6,7},{7,1}};

i11 : getExoticSolutions PentPent

--Display of coordinates of the points suppressed, displaying only angles
--angles (in degrees, first angle is always zero, and is omitted)

+-------+-------+-------+-------+-------+-------+-------+
|310.376|322.782|335.188|347.594|77.594 |155.188|232.782|
+-------+-------+-------+-------+-------+-------+-------+
|49.6241|127.218|204.812|282.406|12.406 |24.8121|37.2181|
+-------+-------+-------+-------+-------+-------+-------+
|310.376|232.782|155.188|77.594 |347.594|335.188|322.782|
+-------+-------+-------+-------+-------+-------+-------+
|0      |0      |0      |0      |0      |0      |0      |
+-------+-------+-------+-------+-------+-------+-------+
|49.6241|37.2181|24.8121|12.406 |282.406|204.812|127.218|
+-------+-------+-------+-------+-------+-------+-------+

i12 : HexPent = graph{{0,1},{1,2},{2,3},{3,4},{4,0},{0,5},{5,6},{6,7},{7,8},{8,1}};

i13 : getExoticSolutions HexPent

--Display of coordinates of the points suppressed, displaying only angles
--angles (in degrees, first angle is always zero, and is omitted)

+-------+-------+-------+-------+-------+-------+-------+-------+
|315.613|326.709|337.806|348.903|63.1225|126.245|189.368|252.49 |
+-------+-------+-------+-------+-------+-------+-------+-------+
|2.64051|91.9804|181.32 |270.66 |72.5281|145.056|217.584|290.112|
+-------+-------+-------+-------+-------+-------+-------+-------+
|44.3875|33.2906|22.1937|11.0969|296.877|233.755|170.632|107.51 |
+-------+-------+-------+-------+-------+-------+-------+-------+
|307.614|230.71 |153.807|76.9034|349.523|339.045|328.568|318.091|
+-------+-------+-------+-------+-------+-------+-------+-------+
|52.3863|129.29 |206.193|283.097|10.4773|20.9545|31.4318|41.9091|
+-------+-------+-------+-------+-------+-------+-------+-------+
|357.359|268.02 |178.68 |89.3399|287.472|214.944|142.416|69.8876|
+-------+-------+-------+-------+-------+-------+-------+-------+
|0      |0      |0      |0      |0      |0      |0      |0      |
+-------+-------+-------+-------+-------+-------+-------+-------+
         \end{verbatim}

        \end{exm}
         \begin{exm}\label{comp4} 
        There are only two examples of SCT graphs on 8 vertices having an exotic solution where the Jacobian matrix has some of the off-diagonal entries $\cos(\theta_j - \theta_i)$ negative. We illustrate with one of these below.
\begin{figure}[h]
\vskip -.15in
\includegraphics[width=5in]{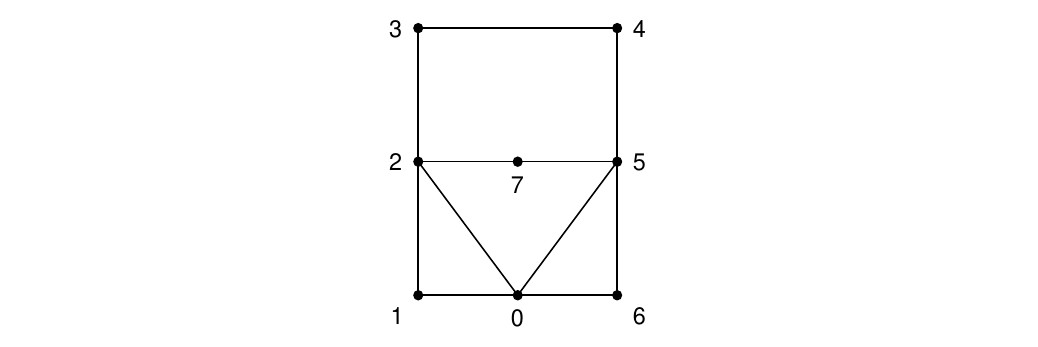} %6.pdf
\vskip -.2in
\caption{One of two SCT graphs $G$ with negative off-diagonal entries in Jacobian.}
\vskip -.05in
\end{figure}
        \begin{verbatim}
i47 : G = graph {{0,1},{1,2},{2,3},{3,4},{4,5},{5,6},{6,0},{0,5},{0,2},{5,7},{2,7}};

i48 : getExoticSolutions G;
-- found extra exotic solutions --
--coordinates of points (x_1..x_n-1, y_1..y_n-1); (x_0,y_0)=(1,0) is omitted.
+----+-----+-----+-----+-----+----+--+-----+-----+-----+-----+-----+-----+-+
|.623|-.222|-.900|-.900|-.222|.623|-1|-.781|-.974|-.433|.433 |.974 |.781 |0|
+----+-----+-----+-----+-----+----+--+-----+-----+-----+-----+-----+-----+-+
|.623|-.222|-.900|-.900|-.222|.623|-1|.781 |.974 |.433 |-.433|-.974|-.781|0|
+----+-----+-----+-----+-----+----+--+-----+-----+-----+-----+-----+-----+-+
|1   |1    |1    |1    |1    |1   |1 |0    |0    |0    |0    |0    |0    |0|
+----+-----+-----+-----+-----+----+--+-----+-----+-----+-----+-----+-----+-+

--angles (in degrees, first angle is always zero, and is omitted)
+-------+-------+-------+-------+-------+-------+---+
|308.571|257.143|205.714|154.286|102.857|51.4286|180|
+-------+-------+-------+-------+-------+-------+---+
|51.4286|102.857|154.286|205.714|257.143|308.571|180|
+-------+-------+-------+-------+-------+-------+---+
|0      |0      |0      |0      |0      |0      |0  |
+-------+-------+-------+-------+-------+-------+---+
 -- 54.6913 seconds elapsed
i49 :     sub(JC, matrix {pts_0})

o49 = | -.801938 .62349   -.222521 0        0        -.222521 .62349   0        |
      | .62349   -1.24698 .62349   0        0        0        0        0        |
      | -.222521 .62349   -1.24698 .62349   0        0        0        .222521  |
      | 0        0        .62349   -1.24698 .62349   0        0        0        |
      | 0        0        0        .62349   -1.24698 .62349   0        0        |
      | -.222521 0        0        0        .62349   -1.24698 .62349   .222521  |
      | .62349   0        0        0        0        .62349   -1.24698 0        |
      | 0        0        .222521  0        0        .222521  0        -.445042 |
         \end{verbatim}
         \vskip -.1in
The Jacobian matrix above  corresponds to the first solution. Letting $\alpha = 51.4286^{\circ} \simeq \frac{2\pi}{7}$, that solution has angles $\theta_0=0$, $\theta_i = -i \cdot \alpha$ for $i \in \{1,..,6\}$, and $\theta_7 = \pi$.
        \end{exm}
          
\section{Summary and future directions}\label{conclusion}
In this work, we study systems of homogeneous Kuramoto oscillators from an algebraic viewpoint. By translating into a system of algebraic equations, we obtain insight into the structure of possible linearly stable solutions. Our focus is on simple, connected graphs with all vertices of degree at least two, which we call SCT graphs. \vskip -.05in
\vskip -.05in
\subsection{Main results} The main results of this paper are the following:
\vskip .05in
\noindent$\bullet$ In Theorem~\ref{PDsolns}, we give sufficient conditions for $I_G$ to have a positive dimensional component in the solution set. This is important because no solution lying on a positive dimensional component can be a linearly stable solution. 
 \vskip .05in
\noindent$\bullet$  In Theorem~\ref{SegreAssPrime}, we identify the ideal $I_\Sigma$ of the Segre variety of $\pp^1 \times \pp^{n-1}$ as an associated prime of $I_G$. We show that all standard solutions lie on the Segre variety. 
\vskip .05in
\noindent$\bullet$ For $G$ having $V \le 8$ vertices, we determine all SCT graphs which admit exotic solutions, and which admit positive dimensional solutions. There are, respectively, $\{3,11,61,507,7442\}$ isomorphism classes of SCT graphs with $V \in \{4,5,6,7,8\}$. Of the graphs on 8 vertices, 81 have exotic solutions, and all of them--with one exception--has an induced cycle of length at least five. 
\vskip .05in
\noindent$\bullet$ The cyclic graph $C_n$ on $n$-vertices always admits exotic solutions, the {\em twisted stable states}, where each angle is a periodic translate of an adjacent angle. Theorem~\ref{KnC5exotic} gives a general method to construct graphs with exotic solutions which are not twisted stable states. 
\vskip -.2in
\subsection{Future directions} This paper raises a number of questions for additional investigation.
\vskip .05in
\noindent$\bullet$ {\bf Structure of the ideal $I_G$.} Building on the work of \S 2, it would be interesting to further analyze the irreducible decomposition of $I_G$. Preliminary results indicate that it may be a radical ideal, and we are conducting further computations to explore the situation. \vskip .05in
 \vskip .05in
\noindent$\bullet$ {\bf Gluing graphs.} In algebraic topology, a standard construction is the {\em Mayer-Vietoris} sequence (see, e.g. \S 4.4.1 of \cite{TDAbook}). Given two topological spaces $X_1$ and $X_2$, select a common subspace $Y$. Since $Y \subseteq X_i$, we can identify $X_1$ and $X_2$ along $Y$. The Mayer-Vietoris sequence relates the topology of $X_1$, $X_2$ and $Y$ to the topology of $X_1 \cup_Y X_2$ (we have ``glued'' $X_1$ and $X_2$ together along a common subgraph). The results in \S 3 suggest studying systems of oscillators using the Mayer-Vietoris sequence, and we are currently at work on this project. \vskip .05in
\vskip .05in
\noindent$\bullet$ {\bf Structure of exotic solutions.} Of the 81 graphs on eight vertices with exotic solutions, all but one have a pair of exotic solutions. As illustrated in Example~\ref{comp3}, a pair of $C_5$'s glued on an edge admits four exotic solutions; a $C_6$ glued on an edge to a $C_5$ admits six exotic solutions. Can we characterize the number of exotic solutions from graph-theoretic data?
\vskip .05in
\noindent$\bullet$ {\bf Real versus complex solutions.} It is possible that a component $X \subseteq {\mathbb V}(I_G)$ has positive complex dimension, but for $X$ to contain only finitely many real points. Evaluating the Jacobian matrix at a real point $p \in X$ will yield a matrix of rank $< n-1$, because the tangent space $T_p(X)$ is positive dimensional. In particular, such a point can never be linearly stable. 
Nevertheless, since linear stability is a sufficient but not necessary condition for stability, analyzing stability for such points is an interesting problem for future work. 
\vskip .05in
\noindent{\bf Acknowledgments.} The January 2025 release of {\tt https://macaulay2.com} incorporates the {\tt Oscillators.m2} package written to support computations here, and is also available at {\tt https://github.com/mikestillman/oscillators}.
Our collaboration began while the second two authors were visitors at Oxford, supported, respectively, by Leverhulme and Simons Fellowships. We thank those foundations for their support, and the Oxford Mathematical Institute for providing a wonderful working atmosphere. Thanks to Steve Strogatz, Alex Townsend, and two anonymous referees for helpful comments.
\pagebreak
%\section{Appendix: the {\tt Macaulay2} package {\tt oscillators.m2}}
%In this section, we give a primer on the software used for the computations in this paper. The %code is available at {\tt http:FILL IN HERE}.
%\subsection{Generating the graphs}
%\subsection{Primary Decomposition}
%\subsection{Determining linear stability: Eigenvalues}
%
%\pagebreak

\bibliographystyle{amsalpha}

\begin{thebibliography}{10}

\bibitem{Abdalla}
P.~Abdalla, S.~Afonso, S.~Bandeira, M.~Kassabov, V.~Souza, S.~Strogatz, A.~Townsend.
\newblock Expander graphs are globally synchronizing.
\newblock {\tt https://arxiv.org/abs/2210.12788}, preprint, 2024.


\bibitem{abrams2016introduction}
D.~M. Abrams, L.~M. Pecora, and A.~E.~Motter.
\newblock Introduction to focus issue: Patterns of network synchronization.
\newblock {\em Chaos},
  26(9):094601, 2016.
  
 \bibitem{abrams2}
D.~M. Abrams, S.~Strogatz.
\newblock Chimera states in a ring of nonlocally coupled oscillators.
\newblock Internat. J. Bifur. Chaos Appl. Sci. Engrg. 16, 21--37, 2006. 

\bibitem{BPR}
S.~Basu, R.~Pollack, M.F.~Roy,
\newblock Algorithms in real algebraic geometry, 52-57.
\newblock Springer Verlag, 2006.

\bibitem{Buck} J.~Buck, E.~Buck.
\newblock Synchronous Fireflies.
\newblock {\em Scientific American}, 234, 24-85, 1976.

\bibitem{canale}
E.~A.~Canale and P.~Monz{\'o}n.
\newblock Exotic equilibria of Harary graphs and a new minimum degree lower bound for synchronization.
\newblock {\em Chaos}, 25(2):023106, 2015.

\bibitem{canale2} E.~A.~Canale, P.~Monz{\'o}n, and F.~Robledo.
\newblock 2-connected synchronizing networks. 
\newblock Bul. Inst. Pol. Iasi. Autom. Control Comput. Sci. Sect. 57(61), 129--141, 2011. 

\bibitem{Chen} T.~Chen, R.~Davis, and D.~Mehta.
\newblock Counting equilibria of the Kuramoto model using birationally invariant intersection index.
\newblock SIAM J. Appl. Algebra Geom. 2(4), 489--507, 2018. 

\bibitem{Coss} O.~Coss, J.~Hauenstein, H. Hong, and D.~Molzahn.
\newblock Locating and counting equilibria of the Kuramoto model with rank-one coupling. 
\newblock SIAM J. Appl. Algebra Geom. 2(1), 45--71, 2018. 

\bibitem{CLS} D. Cox, J. Little, H. Schenck.
\newblock Toric Varieties. 
\newblock AMS Graduate Studies in Mathematics, 2010.

\bibitem{crawford} J.D.~Crawford.
\newblock Scaling and Singularities in the Entrainment of Globally Coupled Oscillators.
\newblock {\em Phys. Rev. Lett.} 74 (21), 1995.

\bibitem{Cumin} D.~Cumin, C.P.~Unsworth, 
\newblock Generalising the Kuromoto model for the study of neuronal synchronisation in the brain.
\newblock {\em Physica D},
226 (2): 181–196, 2007.

\bibitem{deville2016phase}
L.~DeVille and B.~Ermentrout.
\newblock Phase-locked patterns of the Kuramoto model on 3-regular graphs.
\newblock {\em Chaos},
  26(9):094820, 2016.

\bibitem{dokania2011low}
R.~K.~Dokania, X.~Y.~Wang, S.~G.~Tallur, and A.~B.~Apsel.
\newblock A low power impulse radio design for body-area-networks.
\newblock {\em IEEE Trans. Circ. Sys. I: Reg. Papers},
  58(7):1458--1469, 2011.


\bibitem{dorfler1}
F.~D{\"o}rfler and F.~Bullo.
\newblock Synchronization and transient stability in power networks and nonuniform Kuramoto oscillators.
\newblock {\em SIAM J. Control Optim.}, 50:3, 1616--1642, 2012.

\bibitem{dorfler2}
F.~D{\"o}rfler, M.~Chertkov,  and F.~Bullo.
\newblock Synchronization in complex oscillator networks and smart grids.
\newblock {\em Proc. Natl. Acad. Sci.}, 110:2005-2010, 2013.

\bibitem{E} D. ~Eisenbud, 
 \newblock  Commutative Algebra with a view towards Algebraic Geometry, 
 \newblock  Graduate Texts in Mathematics, vol.~150, Springer, Berlin-Heidelberg-New York, 1995.

\bibitem{e3} D. ~Eisenbud, 
\newblock The geometry of syzygies,
\newblock Graduate Texts in Mathematics, vol.~229, Springer, Berlin-Heidelberg-New York, 2005.

\bibitem{golub2012matrix}
G.~H.~Golub and C.~F.~Van~Loan.
\newblock {\em Matrix Computations}, volume~3.
\newblock JHU Press, 2012.

\bibitem{M2}
D.~R.~Grayson and M.~E.~Stillman.
\newblock Macaulay2, a software system for research in algebraic geometry.
\newblock Available at \url{https://www.macaulay2.com}.

\bibitem{hong2005scalable}
Y.-W.~Hong and A.~Scaglione.
\newblock A scalable synchronization protocol for large scale sensor networks and its applications.
\newblock {\em IEEE J. Selected Areas in Comm.},
  23(5):1085--1099, 2005.


\bibitem{jadbabaie}
A.~Jadbabaie, N.~Motee, and M.~Barahona.
\newblock On the stability of the Kuramoto model of coupled nonlinear oscillators.
\newblock In {\em Proc. 2004 Amer. Contr. Conf.},
  volume~5, pages 4296--4301. IEEE, 2004.
  
 \bibitem{kassabov2}
 M.~Kassabov, S.~Strogatz, and A.~Townsend.
 \newblock Sufficiently dense Kuramoto networks are globally synchronizing. 
 \newblock {\em Chaos}, 31(7), 7pp., 2021.
 
\bibitem{kassabov}
M.~Kassabov, S.~Strogatz, and A.~Townsend.
\newblock A global synchronization theorem for oscillators on a random graph.
\newblock {\em Chaos}, 32(8), 8pp., 2022. 

\bibitem{Kloumann}
I.~Kloumann, I.~Lizarraga, and S.~Strogatz.
\newblock Phase diagram for the Kuramoto model with van Hemmen interactions.
\newblock {\em Phys Rev E}, 89, 2014.

\bibitem{kuramoto74}
Y.~Kuramoto, 
\newblock Self-entrainment of a population of coupled non-linear oscillators.
\newblock In {\em International Symposium on Mathematical Problems in Theoretical Physics (Kyoto Univ., Kyoto, 1975)}, pages 420--422. Lecture Notes in Phys., 39. Springer, Berlin, 1975.

\bibitem{kuramoto85}
Y.~Kuramoto.
\newblock {\em Chemical Oscillations, Waves, and Turbulence}.
\newblock Springer, 1984.

\bibitem{lageman}
C.~Lageman, 
\newblock Pointwise convergence of gradient-like systems.
\newblock {\em Mathematische Nachrichten} 280(13-14):1543–1558, 2007.

\bibitem{ling}
S.~Ling, R.~Xu, and A.~S.~Bandeira.
\newblock On the landscape of synchronization networks: A perspective from nonconvex optimization.
\newblock {\em  SIAM J. Optim.}  29(3):1879–1907, 2019.

\bibitem{lu}
J.~Lu and S.~Steinerberger.
\newblock Synchronization of Kuramoto oscillators in dense networks.
\newblock {\em Nonlinearity}, 33(11):5905–5918, 2019.

\bibitem{mallada2010synchronization}
E.~Mallada and A.~Tang.
\newblock Synchronization of phase-coupled oscillators with arbitrary topology.
\newblock In {\em Proc. 2010 Amer. Contr. Conf.}, pages
  1777--1782. IEEE, 2010.

\bibitem{matheny2019exotic}
M.~H.~Matheny, J.~Emenheiser, W.~Fon, A.~Chapman, A.~Salova, M.~Rohden, J.~Li, M.~H.~de~Badyn, M.~P{\'o}sfai, L.~Duenas-Osorio, et~al.
\newblock Exotic states in a simple network of nanoelectromechanical oscillators.
\newblock {\em Science}, 363(6431), 2019.

\bibitem{Mehta15} 
D.~Mehta, N.~S.~Daleo, F.~D\"{o}rfler, and J.~D.~Hauenstein. 
\newblock Algebraic geometrization of the Kuramoto model: Equilibria and stability analysis.
\newblock {\em Chaos}, 25(5) 053103, 2015.

\bibitem{mirollo1990synchronization}
R.~E.~Mirollo and S.~H.~Strogatz.
\newblock Synchronization of pulse-coupled biological oscillators.
\newblock {\em SIAM J. Appl. Math.}, 50(6):1645--1662, 1990.

\bibitem{olver2010nist}
F.~W.~J.~Olver, D.~W.~Lozier, R.~F.~Boisvert, and C.~W.~Clark.
\newblock {\em NIST {H}andbook of {M}athematical {F}unctions {H}ardback and
  {CD-ROM}}.
\newblock Cambridge University Press, 2010.

\bibitem{pecora2014cluster}
L.~M.~Pecora, F.~Sorrentino, A.~M.~Hagerstrom, T.~E.~Murphy, and R.~Roy.
\newblock Cluster synchronization and isolated desynchronization in complex networks with symmetries.
\newblock {\em Nature Comm.}, 5:4079, 2014.


\bibitem{peskin1975mathematical}
C.~S.~Peskin.
\newblock Mathematical aspects of heart physiology.
\newblock {\em Courant Inst. Math. Sci.}, pages 268--278,
  1975.
  
\bibitem{pikovsky2015dynamics}
A.~Pikovsky and M.~Rosenblum.
\newblock Dynamics of globally coupled oscillators: Progress and perspectives.
\newblock {\em Chaos},
  25(9):097616, 2015.

\bibitem{pikovsky2003synchronization}
A.~Pikovsky, M.~Rosenblum, and J.~Kurths.
\newblock {\em Synchronization: A Universal Concept in Nonlinear Sciences},
  volume~12.
\newblock Cambridge University Press, 2003.

\bibitem{rodrigues}
F.~A.~Rodrigues, T.~K.~DM.~Peron, P.~Ji, and J.~Kurths.
\newblock The Kuramoto model in complex networks.
\newblock {\em Phys. Reports}, 610:1--98, 2016.

\bibitem{S} H. Schenck, 
Computational Algebraic Geometry, 
Cambridge University Press, (2003).

\bibitem{TDAbook} H. Schenck, 
Algebraic Foundations for Applied Topology and Data Analysis, 
Springer, (2022).

\bibitem{simeone2008distributed}
O.~Simeone, U.~Spagnolini, Y.~Bar-Ness, and S.~H.~Strogatz.
\newblock Distributed synchronization in wireless networks.
\newblock {\em IEEE Sig.~Proc.~Mag.}, 25(5):81--97, 2008.

\bibitem{sokolov2018sync}
Y.~Sokolov and G.~B.~Ermentrout.
\newblock When is sync globally stable in sparse networks of identical Kuramoto oscillators?
\newblock {\em Phys. A}, 533, 11pp., 2019.

\bibitem{strang}
Linear algebra and its applications,
Brooks and Cole (1988).

\bibitem{strogatz}
S.~Strogatz.
\newblock From Kuramoto to Crawford: exploring the onset of synchronization in populations of coupled oscillators.
\newblock {\em Phys. D}, 143:1--20, 2000.

\bibitem{taylor}
R.~Taylor.
\newblock There is no non-zero stable fixed point for dense networks in the homogeneous Kuramoto model.
\newblock {\em J. Phys. A: Math. Theor.},
  45(5):055102, 2012.
  
\bibitem{townsend}
A.~Townsend, M.~Stillman, S.~Strogatz.
\newblock Dense networks that do not synchronize and sparse ones that do.
\newblock {\em Chaos}, 30(8), 8pp., 2020. 

\bibitem{Udeigwe}
L.C.~Udeigwe, G.B.~Ermentrout.
\newblock Waves and Patterns on Regular Graphs.
\newblock {\em SIAM J. Applied Dynamical Systems}, 140(2), 1102-1129, 2015. 

\bibitem{watanabe1994constants}
S.~Watanabe and S.~H.~Strogatz.
\newblock Constants of motion for superconducting Josephson arrays.
\newblock {\em Physica D: Nonlinear Phenomena}, 74(3-4):197--253, 1994.

\bibitem{werner2005firefly}
G.~Werner-Allen, G.~Tewari, A.~Patel, M.~Welsh, and R.~Nagpal.
\newblock Firefly-inspired sensor network synchronicity with realistic radio
  effects.
\newblock In {\em Proc. 3rd Inter. Conf. Embed. Netw. Sens. Sys.}, pages 142--153. ACM, 2005.

\bibitem{wiley}
D.~A Wiley, S.~H.~Strogatz, and M.~Girvan.
\newblock The size of the sync basin.
\newblock {\em Chaos},
  16(1):015103, 2006.

\bibitem{winfree1967biological}
A.~T.~Winfree.
\newblock Biological rhythms and the behavior of populations of coupled
  oscillators.
\newblock {\em J. Theor. Bio.}, 16(1):15--42, 1967.


\bibitem{yick2008wireless}
J.~Yick, B.~Mukherjee, and D.~Ghosal.
\newblock Wireless sensor network survey.
\newblock {\em Comput. Networks}, 52(12):2292--2330, 2008.

\bibitem{zhang}
Y.~Zhang, J.~Ocampo-Espindola, I.~Kiss, A.~Motte.
\newblock Random heterogeneity outperforms design in network synchronization. 
\newblock Proc. Natl. Acad. Sci. USA 118:21 8pp, 2021.
\end{thebibliography}
\vskip -.6in

\end{document}